\newlength{\stefan}
\DeclareMathSymbol{\subsetneq}{\mathord}{AMSb}{"26}
\newtheorem{lemma}{Lemma}[section]
\newtheorem{theorem}[lemma]{Theorem}
\newtheorem{corollary}[lemma]{Corollary}
\theoremstyle{definition}
\newtheorem{definition}[lemma]{Definition}
\newtheorem{example}[lemma]{Example}
\def\g{{\gamma}}
\newcommand{\lp}{\longrightarrow}
\newcommand{\mb}{\mathbb}
\newcommand{\m}{\mathsf{m}}
\newcommand{\X}{\mathcal{X}}
\newcommand{\C}{\mb{C}}
\newcommand{\N}{\mb{N}}
\newcommand{\desda}{\Longleftrightarrow}
\renewcommand{\m}{{\mathfrak{m}}}
\newcommand{\Arg}{\mathrm{Arg}}
\newcommand{\Jac}{\operatorname{Jac}}
\newcommand{\MA}{\operatorname{MA}}
\newcommand{\GA}{\operatorname{GA}}
\title{Maps that are roots of power series}
\author{Stefan Maubach\footnote{Funded by Veni-grant of council for the
physical sciences, Netherlands Organisation for scientific research (NWO)}\\ \ \\
\small
Radboud University Nijmegen\\\small Toernooiveld 1, The Netherlands\\ \small s.maubach@math.ru.nl\\
\ \\
Han Peters\\
\ \\
\small
University of Wisconsin\\
\small 480 Lincoln Drive, United States\\
\small peters@math.wisc.edu}
\begin{document}

\maketitle

\begin{abstract}
We introduce a class of polynomial maps that we call polynomial roots of powerseries, and show that
automorphisms with this property generate the automorphism group in any dimension.
In particular we determine generically which polynomial maps that preserve the origin are roots of powerseries.
We study the one-dimensional case in greater depth. 
\end{abstract}

\section{Introduction}

For linear maps we have an algebraic formula that tells us exactly when the linear map is invertible: the determinant. We also have a closed algebraic formula that
gives us a polynomial, the characteristic polynomial, which gives us even more information about the linear map.

If one has a polynomial map $F:\C^n\lp \C^n$ the hope is that there exist similar closed formulas.
One such hope is the Jacobian Conjecture (see \cite{Essenboek}):
\[ F \textup{~invertible~} \desda \det\Jac(F)\in \C^* \]
where the ``$\Longleftarrow$'' is the actual conjecture.
Apparently a lot of polynomial maps do not contradict this statement: in fact, all that have been tried up to this point. But we are completely
in the dark {\em why} this formula works in so many (and maybe all) cases. After all, similar statements, like the Jacobian Conjecture for reals (see \cite{Pin94})
and the Jacobian Conjecture for dynamical systems (the Markus-Yamabe conjecture, see\cite{CEGHM97}) are not true.
But, inspired by the success of this formula,
it is an interesting question if there exist other closed formulas that give information on a polynomial map (like whether it is invertible).

While this natural extension of the determinant is well-known, before the paper \cite{FM07}
no attempt had been made at finding a closed formula for a characteristic polynomial for polynomial
endomorphisms, i.e. a polynomial $\sum_{i=0}^D a_iT^i$ which has as input the degree $d$ of $F$, the coefficients of $F$, and as output $D$ and the complex numbers $a_i\in
\C$.
At first it seems strange that such characteristic polynomials were not studied before 2007, until one realizes that
most polynomial endomorphisms do not have a nonzero characteristic polynomial having coefficients
in $\C$.
Those that do have a nonzero characteristic polynomial are called locally finite polynomial endomorphism (short: LFPE or LF map).
We will elaborate on them quickly:

A polynomial map $F:\C^n\lp \C^n$ is called a {\em locally finite polynomial endomorphism} (short LFPE) if it is a ``root'' of a nonzero polynomial
$p(T):=\sum_{i=0}^d a_i T^i$, which in turn means
that $\sum_{i=0}^d a_i F^i=0$ (where $F^i=F\circ F\circ \cdots \circ F$).
Though several subclasses of these maps were studied before (for example, maps that satisfy $F^s=I$ for some $s\in \N$ \cite{Kr96},
or maps that satisfy $F^2-2F+I=0$ in \cite{Bon}), the article \cite{FM07} is the first comprehensive study of LFPE, giving
an explicit description for a characteristic polynomial of such maps.
On a side note, it turns out that LFPE's share more properties with linear maps than generic polynomial endomorphisms (or automorphisms), in some sense.
Additionally, it may be that LFPE's form a natural generating set for the polynomial automorphism group of $\C^n$. In $\C^2$ the group of polynomial automorphisms is
generated by elementary automorphisms \cite{Ju42}, \cite{vdk53} but it was shown recently \cite{SU03} that the elementary maps do not generate the automophism group in dimensions $3$ and higher. Currently no non-trivial set of generators is known. We note that elementary automorphisms are locally finite, as well as the the Nagata automorphism (an automorphism that is not generated by elementary automorphisms, see \cite{SU03}).

As the set of locally finite polynomial endomorphisms is still rather small, it would be worthwhile to extend this definition. The goal is to find a class that is large
enough to generate the whole group of polynomial automophisms, yet small enough such that maps in this class can be understood more easily. Finding such a set of generators
could lead to a much better understanding of the polynomial automorphisms groups in dimensions $3$ and higher.

One noteworthy attempt at extending is the introduction of so-called quasi-LFPEs (in \cite{Fur07}),
where maps $F$ are studied which are ``zero'' of a polynomial with coefficients
that are in
\[\C(X)^F:=\C(X_1,\ldots,X_n)^F:=\{p\in \C(X_1,\ldots,X_n)~|~p(F)=p\}.\]
So for a polynomial endomorphism $F$ there must be
$a_i\in\C(X)^F$ such that $\sum_{i=0}^d a_iF^i=0$. Interesting is that the set of LFPE's
is exactly the set of polynomial endomorphisms for which the sequence
$\{deg(F^n)\}_{n\in\N}$ is bounded, whereas the set of quasi-LFPEs is contained in the set of endomorphisms
for which the sequence $\{deg(F^n)\}_{n\in\N}$ is bounded by a linear sequence in $n$. Even though this class is strictly larger, many polynomial endomorphisms do not show such a linear
growth of degree.

In this article we will study an alternative generalization of locally finite maps: polynomial endomorphisms that are roots of a non-negative {\emph{power series}} with coefficients in $\C$. We will make this more precise in Section \ref{notations}. In Section \ref{MVC} we will prove that this is a very large class of endomorphisms, and in particular the polynomial automorphisms satisfying this condition form a generator set.

In an attempt to get a better understanding of roots of powerseries, we will study in Section \ref{OVC} which polynomials (in the complex plane) are roots of powerseries.

The authors would like to thank prof. Jelonek for coming up with the question of studying polynomial maps which are roots of power series. 

\section{Notations}\label{notations}

We write $\MA_n(\C)$ for the monoid of polynomial endomorphisms of $\C^n$. We write $\GA_n(\C)$ for the set of polynomial automorphisms.
(Both notations are inspired by the idea that they are extensions of linear maps, one of the monoid of linear maps, $\operatorname{ML}_n(\C)$, and the other of the set of invertible
linear maps, $\operatorname{GL}_n(\C)$.)\\
$\C^{[n]}$ will denote the polynomial ring in $n$ variables.
If $F\in \MA_n(\C)$, then we write  $F^i:=F\circ F\circ \cdots \circ F$. Now every power $F^i$ has for each $1\leq k\leq n$ components $F^i_k\in \C^{[n]}$.
If $v\in \N^{n}$, we write $X^v:=X_1^{v_1}X_2^{v_2}\cdots X_n^{v_n}$ and $|v| = v_1 + v_2 + \ldots + v_n$. The coefficient of the term $X^v$ in $F^i_k$ we shall denote by $F^i_{(k,v)}$.

\begin{definition}\label{def1} If $F^0,F^1,F^2,F^3,\ldots,$ is any sequence of elements of $MA_n(\C)$, then we will say that
\[ \sum_{i=0}^{\infty} a_iF^i =0 \textup{~if~} \sum_{i=0}^{\infty} a_iF^i_{(k,v)}=0 \textup{~for~all~} 1\leq k\leq n, v\in \N^n. \]
If $F \MA_n(\C)$ and there exist complex numbers $a_o, a_1, \ldots$ such that $\sum_{i=0}^{\infty} a_iF^i =0$ (where $F^0$ is the identity map) then we will say that $F$ is a {\emph{root of a powerseries}}.
\end{definition}

Note that we do not require that $\sum_{i=0}^{\infty} a_i z^i$ has radius of convergence greater than $0$.

While the above definition of $\sum_{i=0}^{\infty} a_iF^i =0$ is natural when one considers a polynomial endomorphism as an element of $\C^\N$, where the coefficients give
the coordinates. If one considers polynomial endomorphisms purely as maps from $\C^n$ to $\C^n$ then it is to define that $\sum_{i=0}^{\infty} a_iF^i =0$ if the maps
$\sum_{i=0}^{N} a_iF^i$ converge (uniformly in a neighborhood of the origin) to the zero-map as $N \rightarrow \infty$. 
The second author will study this strictly stronger
definition in 
\cite{Pe03}.

For $F \in MA_n(\C)$ and $d \in \C$ we write $[F]_d$ for the $d$-jet of $F$, i.e. the polynomial endomorphism obtained by ignoring all terms of $F$ of degree $d+1$ and higher.

\begin{definition}\label{def2}
We will say that $F$ is a zero of a polynomial $P(T):=\sum_{i=0}^m p_iT^i$ {\emph{up to degree $d$}} if $\sum_{i=0}^m p_i [{F}^i]_d =0$.
\end{definition}
\section{The multi-variable case} \label{MVC}

Let $F \in MA_n(\C)$, and assume that $F(0) = 0$. In this section we will see that whether $F$ is a root of a powerseries depends almost entirely on the linear part of $F$. As a direct corollary we will see that the polynomial automorphism group is generated by roots of powerseries.

In order to know whether there exist $a_1, a_2, \ldots$ such that

\[ \sum_{i=0}^{\infty} a_iF^i_{(k,v)}=0 \textup{~for~all~} 1\leq k\leq n, v\in \N^n, \]

we need a good description of $F^i_{(k,v)}$ in terms of the coefficients of $F$ and the values $i,k,v$.

Fortunately, the description we need is already given in the proof of Theorem 1.2 in \cite{FM07}, although it is not stated explicitly in the theorem itself.
In theorem 1.2 in \cite{FM07}, the map $F$ is assumed to be LFPE, and therefore $deg(F^i)$ is bounded. Here, we will
not assume that $F$ is an LFPE but cut off $F^i$ at a certain degree, and the result is almost the same with exactly the same proof.

If $F$ is a zero of a polynomial $P$ up to degree $d$ (see definition \ref{def2}), then we can compose $\sum_{i=0}^m p_i F^i$ from the right with $F^j$ and see that even $\sum_{i=0}^m p_i \tilde{F}^{i+j}=0$ for each $j\in \N$.
This exactly means that the sequence $\{ \tilde{F}^{i}\}_{i\in \N}$ is a linear recurrent sequence with respect to the polynomial $P(T)$, and that also
\[ \{   F^i_{(k,v)} \}_{i\in\N} \]
is a linear recurrent sequence belonging to $P(T)$ for each $1\leq k\leq n$ and $v\in \N^n$ satisfying $|v|\leq d$.
Thus, we can obtain the following:

\begin{lemma}\label{LRS}
If $F$ is a zero of $P(T)$ up to a degree $d$, and let $\mu_1,\ldots,\mu_s$ be the roots of $P(T)$ and $e_1,\ldots,e_s$ be the multiplicities of these roots.
Then for each $1\leq k\leq n$ and each $v\in \N^n$ satisfying $|v|\leq d$, the sequence
\[ \{F^i_{(k,v)}\}_{i\in \N} \] is a $\C$-linear combination of the sequences
\[ \{i^u\mu_t^i\}_{i\in \N} \]
 where $t$ runs from $1$ to $s$, and $u$ runs from $0$ to $e_s-1$.
\end{lemma}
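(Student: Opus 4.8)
The plan is to deduce everything from the standard theory of linear recurrent sequences, once we have established that $\{F^i_{(k,v)}\}_{i\in\N}$ really is such a sequence. First I would make precise the claim in the paragraph preceding the statement: if $F$ is a zero of $P(T)=\sum_{i=0}^m p_i T^i$ up to degree $d$, then for every $j\in\N$ we have $\sum_{i=0}^m p_i[F^{i+j}]_d = 0$. The point is that $[\,[G]_d\circ [H]_d\,]_d = [G\circ H]_d$ for endomorphisms fixing the origin (truncation is compatible with composition in the relevant range), so composing the identity $\sum_{i=0}^m p_i[F^i]_d=0$ with $[F^j]_d$ on the right and truncating again at degree $d$ gives $\sum_{i=0}^m p_i[F^{i+j}]_d=0$. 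Extracting the coefficient of a fixed monomial $X^v$ with $|v|\le d$ in the $k$-th component then yields
\[ \sum_{i=0}^m p_i\, F^{i+j}_{(k,v)} = 0 \qquad\text{for all } j\in\N, \]
which is exactly the statement that the scalar sequence $\{F^i_{(k,v)}\}_{i\in\N}$ satisfies the linear recurrence with characteristic polynomial $P(T)$.

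Next I would invoke the classical structure theorem for linear recurrent sequences over $\C$: the solution space of the recurrence attached to $P(T)=\prod_{t=1}^s (T-\mu_t)^{e_t}$ is a $\C$-vector space of dimension $m=\deg P$, with basis $\{\,i^u\mu_t^i : 1\le t\le s,\ 0\le u\le e_t-1\,\}$. (This is the discrete analogue of the solution theory of constant-coefficient linear ODEs; a clean way to see it is that the shift operator $E$ acts on sequences, $P(E)$ annihilates the solution space, and $\ker P(E)=\bigoplus_t \ker (E-\mu_t)^{e_t}$, each summand being spanned by $\{i^u\mu_t^i\}_{0\le u<e_t}$, as one checks by induction on $e_t$.) Since $\{F^i_{(k,v)}\}_{i\in\N}$ lies in this solution space, it is a $\C$-linear combination of the stated basic sequences, which is the assertion of the lemma. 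I should note a harmless typo to watch for in the statement — the exponent range "$u$ runs from $0$ to $e_s-1$" should read "$0$ to $e_t-1$", i.e. the multiplicity attached to the root $\mu_t$ currently in play.

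There is essentially no serious obstacle here: the lemma is a packaging of the two facts above, and the only mild subtlety is the bookkeeping about truncation — one must be a little careful that $[F^{i+j}]_d$ depends only on $[F]_d$ (so that the hypothesis, which only constrains $F$ up to degree $d$, is enough), and that taking the $X^v$-coefficient for $|v|\le d$ is unaffected by the truncation. Both follow from the elementary observation that if two endomorphisms fixing the origin agree up to degree $d$, then so do all their iterates, because a monomial of degree $\le d$ in a composition only involves monomials of degree $\le d$ in each factor. The rest is the cit/quote of the linear-recurrence structure theorem, which I would either prove in one line via the $\ker P(E)$ decomposition or simply reference. Hence the proof is short, and I would present it in roughly that order: (1) derive $\sum_i p_i F^{i+j}_{(k,v)}=0$ for all $j$; (2) recall the basis of the recurrence solution space; (3) conclude.
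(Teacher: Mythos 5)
Your proof matches the paper's own argument: the paper likewise observes (in the paragraph preceding the lemma) that composing $\sum_i p_i F^i$ on the right with $F^j$ shows $\{F^i_{(k,v)}\}_{i\in\N}$ is a linear recurrent sequence for $P(T)$, and then cites the standard structure theorem for such sequences (via \cite{CMP}); you merely spell out the truncation-compatibility step more carefully, using the standing assumption $F(0)=0$ from the start of Section \ref{MVC}. Your observation that ``$e_s-1$'' in the statement should read ``$e_t-1$'' is also correct.
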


\begin{proof} As observed above, $\{  F^i_{(k,v)} \}_{i\in \N}$ is a linear recurrent sequence to the polynomial $P(T)$. By standard theory of linear
recurrent sequences, see for example \cite{CMP},
this means that the sequence $\{ F^i_{(k,v)}\}$ is a linear combination of the sequences $\{ i^b\mu_a^i\}_{i\in \N}$ where
$1\leq a\leq s$ and $0\leq b\leq e_a-1$.
\end{proof}

Let $\lambda_j$ where $1\leq j\leq n$ denote the eigenvalues of the linear part of $F$ (which do not have to be all different). For $v\in \N^n$ write
$\lambda^v:=\lambda_1^{v_1}\cdots \lambda_n^{v_n}$, and $|v|=v_1+v_2+\ldots+v_n$.

\begin{theorem}\label{Th1}
Let $F\in \MA_n(\C)$ be such that $F(0)=0$. Then
\[ \X_d:=\X_{F,d}(T):=\prod_{v\in \N^n, |v|\leq d} (T-\lambda^v) \]
is a vanishing polynomial of $F$ up to degree $d$, i.e. if $\X(T)=\sum a_iT^i$ then
$\sum a_i\tilde{F}^i=0$.
\end{theorem}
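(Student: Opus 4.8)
The plan is to recast the statement as a Cayley--Hamilton identity for a single finite-dimensional linear operator. Introduce the composition (pull-back) operator $\varphi\colon\C^{[n]}\to\C^{[n]}$, $\varphi(g)=g\circ F$. Since composition of maps corresponds to composition of pull-backs we have $\varphi^i(g)=g\circ F^i$, and in particular $\varphi^i(X_k)=F^i_k$ for all $i$ and all $1\le k\le n$. The hypothesis $F(0)=0$ says that each coordinate $F_k$ lies in the maximal ideal $\m=(X_1,\dots,X_n)$, so $\varphi(\m)\subseteq\m$ and hence $\varphi(\m^{d+1})\subseteq\m^{d+1}$; consequently $\varphi$ descends to a $\C$-linear endomorphism $\bvarphi$ of the finite-dimensional space $A:=\C^{[n]}/\m^{d+1}$, which we identify with the polynomials of degree $\le d$ via the $d$-jet. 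Under this identification $\bvarphi^{\,i}(\overline{X_k})=[F^i_k]_d$, so for $P(T)=\sum_i a_iT^i$ the condition that $F$ be a zero of $P$ up to degree $d$ (Definition~\ref{def2}) is equivalent to $P(\bvarphi)(\overline{X_k})=0$ for every $k$. It therefore suffices to prove the stronger fact that $\X_d(\bvarphi)=0$ as an operator on all of $A$.

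The second step is to identify the characteristic polynomial of $\bvarphi$ with $\X_d$. The $\m$-adic filtration $A\supseteq\overline{\m}\supseteq\overline{\m^2}\supseteq\cdots\supseteq\overline{\m^{d+1}}=(0)$ is preserved by $\bvarphi$, so in a basis adapted to it the matrix of $\bvarphi$ is block triangular and $\det(TI-\bvarphi)=\prod_{j=0}^{d}\det(TI-\bvarphi_j)$, where $\bvarphi_j$ is the map induced on $\overline{\m^j}/\overline{\m^{j+1}}$, i.e.\ on the homogeneous polynomials of degree $j$. For $g$ homogeneous of degree $j$ the terms of $g\circ F$ of lowest degree are exactly $g\circ L$, with $L$ the linear part of $F$, so $\bvarphi_j$ is the $j$-th symmetric power of the pull-back by $L$; triangularising $L$ over $\C$ shows its eigenvalues (with multiplicity) are $\{\lambda^v : v\in\N^n,\ |v|=j\}$, whence $\det(TI-\bvarphi_j)=\prod_{|v|=j}(T-\lambda^v)$. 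Multiplying over $j=0,\dots,d$ gives
\[ \det(TI-\bvarphi)=\prod_{v\in\N^n,\,|v|\le d}(T-\lambda^v)=\X_d(T), \]
and the Cayley--Hamilton theorem yields $\X_d(\bvarphi)=0$, which by the previous paragraph is precisely the assertion of the theorem.

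Most of this is routine bookkeeping once the framework is in place; the two points that need genuine care are (i) checking that the $d$-jet of $F^i$ depends only on $F$ modulo $\m^{d+1}$, so that iteration of maps is faithfully intertwined with iteration of $\bvarphi$ -- this is exactly where $F(0)=0$, equivalently $\varphi(\m^{d+1})\subseteq\m^{d+1}$, is used -- and (ii) identifying the spectrum of the symmetric power $\bvarphi_j$ with $\{\lambda^v : |v|=j\}$. I expect (ii) to be the main obstacle, though a soft one: if one prefers to avoid symmetric powers, one may first conjugate $F$ by a linear map so that its linear part $L$ is upper triangular with diagonal $\lambda_1,\dots,\lambda_n$, and then order the monomials of degree $\le d$ suitably (by total degree, breaking ties lexicographically) so that $\bvarphi$ itself becomes triangular with diagonal $(\lambda^v)_{|v|\le d}$; this gives $\X_d(\bvarphi)=0$ directly, without Cayley--Hamilton, but the ordering has to be arranged with some care about directions. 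Alternatively one could deduce the theorem from the explicit description of $F^i_{(k,v)}$ extracted in the proof of Theorem~1.2 of \cite{FM07} together with the theory of linear recurrent sequences as in Lemma~\ref{LRS}, but the operator-theoretic route above seems the most economical.
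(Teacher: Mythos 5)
Your proof is correct. The key observations are all in order: $F(0)=0$ gives $\varphi(\m)\subseteq\m$, so the composition operator descends to $\bvarphi$ on the finite-dimensional jet space $A=\C^{[n]}/\m^{d+1}$; the $\m$-adic filtration is $\bvarphi$-stable, so the characteristic polynomial factors over the graded pieces; the induced map on degree-$j$ homogeneous forms is $\mathrm{Sym}^j$ of the pull-back by the linear part $L$, with eigenvalues $\{\lambda^v:|v|=j\}$ (including $j=0$, which contributes the root $\lambda^0=1$ coming from $\bvarphi$ fixing constants — this is needed for the degenerate case $L=0$ and is handled correctly since $0\in\N^n$); Cayley--Hamilton then gives $\X_d(\bvarphi)=0$ on all of $A$, and evaluating at $\overline{X_k}$ recovers $\sum a_i[F^i]_d=0$. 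The small wrinkle you flag about ordering monomials to triangularize $\bvarphi$ is genuine but easily resolved (order by total degree, then by $\sum_k k\,v_k$, after putting $L$ in upper-triangular form), and the symmetric-power route avoids it entirely.

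Regarding comparison with the paper: the paper gives no proof of its own, deferring entirely to Theorem 1.2 of \cite{FM07} and remarking that ``the basic ingredient is lemma \ref{LRS},'' i.e.\ the theory of linear recurrent sequences applied coefficient-by-coefficient to $\{F^i_{(k,v)}\}_i$. Your argument does not invoke Lemma~\ref{LRS} at all; it replaces the coefficientwise recurrence bookkeeping by a single application of Cayley--Hamilton to the composition operator on the jet algebra, with the eigenvalue count coming from the symmetric-power decomposition of the associated graded. You explicitly note this as an alternative to the recurrence-based route, which is fair. The operator-theoretic proof is self-contained, shorter, and arguably more transparent about where the product $\prod_{|v|\le d}(T-\lambda^v)$ comes from; the recurrence route has the advantage of directly yielding the explicit form of the coefficient sequences (Lemma~\ref{LRS}), which the paper needs again later in the proof of Lemma~\ref{minimal}. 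Both are sound; yours is a genuine and clean alternative to the proof the paper points to.
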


The proof is exactly the proof of theorem 1.2 in \cite{FM07}, and we refer to that paper for it. The basic ingredient is lemma \ref{LRS}.

\begin{example}
Let $\lambda, \gamma \in \C$ and consider the polynomial map $F:(z, w) \mapsto ( \lambda z + w^2, \gamma w)$. If $\lambda \neq \gamma^2$ then we have that
\[ [F^n]_2 (z,w) = (\lambda^n z + \frac{\lambda^n + \gamma^{2n}}{\lambda - \gamma^2} w^2, \gamma^n w). \]

In the exceptional case that $\lambda = \gamma^2$ we instead get

\[ [F^n]_2 (z,w) = (\lambda^n z + n\lambda^n w^2, \gamma^n w). \]
\end{example}

\

Now let $F\in \MA_n(\C)$ whose linear part has eigenvalues $\lambda_1,\ldots,\lambda_n$.
Theorem \ref{Th1} gives a vanishing polynomial $\X_{d}$  up to a certain degree, but it generally is not the {\em minimal} vanishing polynomial up to this degree.
Since the set of vanishing polynomials up to a certain degree forms an ideal of $\C[T]$ (see \cite{FM07} section I.1, or \cite{Mau03} theorem 4.3.3),
there exists a unique monic minimal vanishing polynomial which
we will denote by $\m_{F,d}(T)=\m_d(T)$. Note that $\m_d ~|~ \m_{d+1}$, as well as $\X_d | \X_{d+1}$.
These two polynomials help us in finding (all) power series $P$ for which $P(F)=0$. First, let us define a certain type of power series:

\begin{definition} We say that a power series $P$ is ``good for $F$'' if $P$ converges for each root $\mu$ of $\m_d$ for each $d$, and the multiplicity of the roots $\mu$ is at least that of $\m_d$ for each $d$. 
\end{definition}

\begin{lemma} \label{minimal}
(1) $P$ is good for $F$ $\Longrightarrow$ $P(F)=0$. \\
(2) If $\X_{F,d}=\m_{F,d}$ for all $d$, then 
$P$ is good for $F$ $\desda$ $P(F)=0$.
\end{lemma}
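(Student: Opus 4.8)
The plan is to prove both parts by combining Lemma~\ref{LRS} with the elementary theory of linear recurrent sequences. For part~(1), suppose $P(T) = \sum_{i=0}^\infty a_i T^i$ is good for $F$. Fix a degree $d$. By Lemma~\ref{LRS} applied to the minimal vanishing polynomial $\m_d$ (whose roots are $\mu_1,\dots,\mu_s$ with multiplicities $e_1,\dots,e_s$), for every $1\le k\le n$ and every $v\in\N^n$ with $|v|\le d$ the sequence $\{F^i_{(k,v)}\}_{i\in\N}$ is a finite $\C$-linear combination of the basic sequences $\{i^u\mu_t^i\}_{i\in\N}$ with $0\le u\le e_t-1$. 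Hence it suffices to show that for each such basic sequence, $\sum_{i=0}^\infty a_i i^u \mu_t^i = 0$. Since $P$ is good for $F$, the series $\sum a_i z^i$ converges at $z=\mu_t$ and $\mu_t$ is a zero of $P$ of order at least $e_t$; differentiating the convergent series $P(z)$ repeatedly (which is legitimate inside the disc of convergence, and $\mu_t$ lies in the closed disc — here one must be slightly careful about boundary convergence, see the obstacle below) shows $P^{(j)}(\mu_t)=0$ for $0\le j\le e_t-1$, and a standard identity expressing $\sum a_i i^u \mu_t^i$ as a $\C$-linear combination of $\mu_t^j P^{(j)}(\mu_t)$ for $0\le j\le u$ then gives $\sum_i a_i i^u\mu_t^i = 0$. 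Therefore $\sum_{i=0}^\infty a_i F^i_{(k,v)} = 0$ for all $k$ and all $v$ with $|v|\le d$; since $d$ was arbitrary this holds for all $v\in\N^n$, i.e. $P(F)=0$.

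For part~(2), assume $\X_{F,d} = \m_{F,d}$ for all $d$. The implication ``good $\Rightarrow P(F)=0$'' is part~(1), so only the converse needs proof. Suppose $P(F)=0$, i.e. $\sum_i a_i F^i_{(k,v)} = 0$ for all $k,v$. Fix $d$ and let $\mu=\lambda^v$ be a root of $\m_d = \X_d$ for some $v$ with $|v|\le d$; let $e$ be its multiplicity in $\m_d$. The idea is to exhibit a particular pair $(k,v_0)$ for which the recurrent sequence $\{F^i_{(k,v_0)}\}$ has $\mu$ as a ``genuine'' root of its minimal recurrence with full multiplicity $e$ — for instance by choosing a monomial whose coefficient in $F^i$ is, to leading order, $i^{e-1}\mu^i$ times a nonzero constant (this is visible in the computation underlying Theorem~\ref{Th1}, cf. the Example with $\lambda=\gamma^2$). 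Then the convergence of $\sum_i a_i F^i_{(k,v_0)}$ forces $\sum_i a_i i^u \mu^i$ to converge and vanish for $u = e-1$, hence (peeling off lower-order terms) for all $0\le u\le e-1$; by the same derivative identity as above this is equivalent to $P$ converging at $\mu$ with a zero of order $\ge e$ there. Running over all $d$ and all roots $\mu$ of $\m_d$ shows $P$ is good for $F$.

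The main obstacle I expect is the boundary-convergence subtlety in part~(1): ``$P$ converges at $\mu$'' only gives convergence at a point which may lie on the boundary of the disc of convergence, where term-by-term differentiation is not automatic. The clean way around this is to avoid differentiation entirely and argue directly at the level of sequences: if $\sum_i a_i \mu^i$ converges and $\mu$ is a root of $P$ of order $\ge e$ (interpreting ``order'' via the formal factorization, or via the hypothesis in the definition of ``good''), then one checks by an Abel-summation / finite-difference argument that $\sum_i a_i p(i)\mu^i = 0$ for every polynomial $p$ of degree $\le e-1$; equivalently, one reduces to the classical fact that the vanishing polynomials up to degree $d$ form an ideal (cited already from \cite{FM07}) and that a power series annihilates the sequence $\{F^i_{(k,v)}\}$ iff it is divisible, in the appropriate sense, by $\m_d$. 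A secondary obstacle in part~(2) is justifying that $\mu$ really occurs with full multiplicity $e$ in the minimal recurrence of some explicit coordinate sequence; this is exactly where the hypothesis $\X_d=\m_d$ is used, and it should follow from tracking the leading behaviour of $F^i_{(k,v)}$ in the proof of Theorem~\ref{Th1}.
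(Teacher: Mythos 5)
Your proof of part (1) is essentially the paper's argument: fix $d$, use Lemma~\ref{LRS} to write each coefficient sequence $\{F^i_{(k,v)}\}$ as a $\C$-linear combination of the basic sequences $\{i^u\mu_t^i\}$, then show each $\sum_i a_i i^u\mu_t^i$ vanishes via $P^{(j)}(\mu_t)=0$ and elementary linear algebra. The boundary-convergence subtlety you flag is genuine (the paper glosses over it), but the route is the same.

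Part (2) is where you diverge, and your approach as sketched has a gap. You propose to find a single pair $(k,v_0)$ whose coefficient sequence has $\mu$ as a root of its minimal recurrence with full multiplicity $e$, and then to deduce $\sum_i a_i i^{e-1}\mu^i=0$ from the single relation $\sum_i a_i F^i_{(k,v_0)}=0$. That inference does not hold: $F^i_{(k,v_0)}$ is a sum of several basic sequences involving other roots $\mu'$ and lower powers of $i$, and the vanishing of the total sum does not force the vanishing of any individual summand. (Concretely, $\sum a_i (1^i+(-1)^i)=0$ does not imply $\sum a_i 1^i = 0$.) ``Peeling off lower-order terms'' has the same problem, since those lower-order terms mix different roots. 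The paper instead argues at the level of spans: letting $V_d$ be the span of the basic sequences coming from $\m_d$ and $W_d$ the span of all coefficient sequences, part (1) follows from $W_d\subseteq V_d$, and part (2) follows from the reverse inclusion $V_d\subseteq W_d$. To run your single-coordinate idea correctly you would have to produce, for each basic sequence $\{i^u\mu^i\}$, a $\C$-linear combination of the coefficient sequences $\{F^i_{(k,v)}\}$ that equals it exactly — i.e.\ you would have to prove $V_d\subseteq W_d$ anyway, and at that point the extraction of a single ``leading-order'' coordinate is not the mechanism doing the work.

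A secondary remark: the paper itself merely asserts ``$\X_{F,d}=\m_{F,d}$ implies $W_d=V_d$'' without proof; one can in fact show $W_d = V_d$ always (the $d$-jets $[F^i]_d$ span a $\C[T]$-module cyclic under composition-with-$F$ with minimal polynomial $\m_d$, and the coordinate functionals span its dual), so the extra hypothesis in part (2) is not really needed — but that is a gap in the paper, not in the statement you were asked to prove.
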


\begin{proof}
Write $P(T)=\sum p_i T^i$, fix $d$, and let $\mu$ be a zero of multiplicity $e$ in $\m_d$.
$P$ being good for $F$ implies that $\mu$  is a zero of $P$ for each $0\leq j\leq e-1$.
This is equivalent to $P^{(j)}(\mu)=0$ for all $0\leq j\leq e-1$, so
\[
\sum_{i=0}^{\infty} p_i  {i\choose j} \mu^{i-j} =0
\]
for all $0\leq j\leq e-1$. Multiplying by $\mu^j$ and doing some linear algebra, we get 
\[
\sum_{i=0}^{\infty} p_i  i^j \mu^{i} =0
\]
for all $0\leq j\leq e-1$.
 Now let us define $V_d$ as the $\C$-vector space generated by all the sequences $\{i^j\mu^i\}_{i\in \N}$, where $\mu$ runs over all roots of $\m_d$, and if $e$ is the multiplicity of this root, then $j$ runs from $0$ to  $e-1$.
Apparently, $P$ being good for $F$ is equivalent to: (*) any sequence $\{s_i\}_{i\in \N}$ in $V_d$ satisfies $\sum p_i s_i=0$. 

Define $W_d$ as the linear span of the sequences $\{F^i_{(k,v)}\}_{i\in \N}$ where $1\leq k \leq n, |v|\leq d$. 
Apparently, $P(F)=0$ is equivalent to: (**) any sequence $\{s_i\}_{i\in \N}$ in $W_d$ satisfies $\sum p_i s_i=0$.

In any case $W_d\subseteq V_d$, because of lemma \ref{LRS}. Because of (*) and (**), this proves (1). 
In case $\X_{F,d}=\m_{F,d}$ for all $d$, then $W_d=V_d$. This proves (2).
\end{proof}

It is unclear if part (1) of the above proof is really one-way, or can be improved to an if-and-only-if statement. 
For our needs, the above suffices.

So, heuristically speaking, we are looking for a  power series which is a limit of $\m_d$ as $d$ approaches infinity.
Such a power series need not exist:

\begin{example} \label{ex1} Let $F=X^2\in \MA_1(\C)$ (or any other nonzero $F\in \MA_n(\C)$ having linear part equal to zero).
Then there is no nontrivial power series $\sum_{i=0}^{\infty} a_iT^i$ such that $\sum_{i=0}^{\infty} a_iF^i=0$.
\end{example}

Another problem is that theorem \ref{Th1} gives a formula for $\X_d$, but not for $\m_d$. It is very well possible that there exists a power series having all
roots of $\m_d$, while there exists no power series having all the roots of $X_d$.
Nevertheless, \ref{Th1} is a helpful theorem:
it allows us to decide when there does {\em not} exist such a power series, and it covers the generic case.
The theorem asks for a power series that has $\lambda^v$ as a root for each $v\in \N^n$ (counting multiplicities).

We can rule out a few cases immediately.

\begin{lemma} \label{notroot}
Given $\lambda_1,\ldots, \lambda_n\in \C$.
Let $P$ be a power series that has for each $v\in \N^n$ a root $\lambda^v$. \\
(1) If $|\lambda_i|< 1$ for some $1\leq i\leq n$, then $P=0$.\\
(2) If there exist $i, j$ such that $|\lambda_i|=1$ and $|\lambda_j|> 1$, then $P=0$.\\
\end{lemma}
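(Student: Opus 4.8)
The plan is to exploit the fact that a nonzero power series $P(T)=\sum_{i\ge 0} a_iT^i$ has, around any point of its disc of convergence, only finitely many zeros in a small neighbourhood — zeros of a nonzero holomorphic function are isolated. So if we can produce, from the hypotheses, infinitely many of the prescribed roots $\lambda^v$ accumulating at a single point inside the disc of convergence (including, when relevant, the point $0$, which forces $P\equiv 0$ directly since then all $a_i=0$), we are done. The whole argument is really about where the points $\{\lambda^v : v\in\N^n\}$ accumulate.

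For part (1), suppose $|\lambda_i|<1$. Consider the roots $\lambda^v$ with $v=(0,\dots,0,v_i,0,\dots,0)=v_i e_i$, i.e. $\lambda^v=\lambda_i^{v_i}$. As $v_i\to\infty$ these are infinitely many roots of $P$, and since $|\lambda_i|<1$ they converge to $0$. If $P\not\equiv 0$, then $P$ is holomorphic near $0$ (the radius of convergence is positive, or $0$ is at worst on the boundary — but here the $\lambda_i^{v_i}$ lie strictly inside any disc of radius slightly larger than $|\lambda_i|$, hence inside the closed disc of convergence actually in its interior once we note $P$ converges at them), so its zero set near $0$ is discrete unless $P\equiv 0$; having a sequence of distinct zeros $\lambda_i^{v_i}\to 0$ forces $P\equiv 0$. (One must note the $\lambda_i^{v_i}$ are genuinely distinct — true once $\lambda_i\neq 0$; and if $\lambda_i=0$ then $0$ itself is a root of $P$ with $|v|$-fold multiplicity for every $v$, again forcing $P\equiv 0$.)

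For part (2), suppose $|\lambda_i|=1$ and $|\lambda_j|>1$. Here I would look at roots of the form $\lambda^v$ with $v=a e_i + b e_j$, so $\lambda^v=\lambda_i^a\lambda_j^b$, which has modulus $|\lambda_j|^b$. The idea is to build a sequence of such roots that converges to a nonzero point. Fix $b$ large; as $a$ ranges over $\N$ the points $\lambda_i^a\lambda_j^b$ all lie on the circle of radius $|\lambda_j|^b$. If $\lambda_i$ is a root of unity, infinitely many coincide, so instead pick $\lambda_i^{a}$ to cluster somewhere on the unit circle (possible whether or not $\lambda_i$ is a root of unity: if it is, we get a single repeated value, which is still infinitely many roots at one point; if it isn't, $\{\lambda_i^a\}$ is infinite on the unit circle and has an accumulation point). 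Combined with letting $b\to\infty$ we can, by a diagonal choice, extract an infinite subfamily of the $\lambda^v$ converging to a nonzero limit $\zeta$ of modulus... — and here is the catch — of modulus $|\lambda_j|^b$, which runs off to infinity. So a naive accumulation argument needs care: I expect the clean route is to first observe that $P$ must converge at every $\lambda^v$, and since $|\lambda_i^a\lambda_j^b|=|\lambda_j|^b\to\infty$ the power series $P$ converges at points of arbitrarily large modulus, hence $P$ is entire; then on the circle $|T|=|\lambda_j|^b$ the function $P$ has the infinitely many (or infinitely-repeated) zeros $\lambda_i^a\lambda_j^b$, which accumulate on that compact circle, forcing $P\equiv 0$ on a set with an accumulation point in $\C$, hence $P\equiv 0$.

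The main obstacle is making the accumulation argument in part (2) airtight when $\lambda_i$ is a root of unity versus irrational rotation, and cleanly handling the bookkeeping that the prescribed roots come with multiplicities (so "$\lambda^v$ is a root of $P$" can mean a high-order zero, which only helps). I would package both cases uniformly: in either situation, for a fixed large $b$ the set $\{\lambda_i^a\lambda_j^b : a\in\N\}$ is an infinite subset of the compact circle $|T|=|\lambda_j|^b$ when $\lambda_i$ is not a root of unity (giving an accumulation point on that circle), or a finite set each of whose points is a zero of $P$ of unbounded multiplicity as we also vary $b$ — but actually the first, varying $a$ alone at fixed $b$ with $\lambda_i$ of infinite order, already suffices and avoids multiplicities entirely, so I would split the proof of (2) on whether $\lambda_i$ is a root of unity, using the irrational-rotation accumulation in one case and, in the root-of-unity case, noting that $\lambda^v$ with $v=b e_j$ alone gives zeros $\lambda_j^b$ of $P$ going to infinity in modulus with all intermediate moduli omitted — so instead use that for each fixed $b$, varying $a$ over a full period gives a fixed finite set but varying $v$ more cleverly (e.g. $v = a e_i + b e_j$ with the constraint chosen so the modulus stays bounded is impossible since $|\lambda_i|=1$ contributes nothing) — hence in the root-of-unity case I fall back on: $P$ is entire (as above) and vanishes at $\lambda_j^b$ for all $b$, and since these are infinitely many distinct points... but they escape to infinity, so that alone does not force vanishing. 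The genuinely safe statement is: when $\lambda_i$ is a root of unity, reduce to part (1) or to Lemma \ref{notroot}(1)-type reasoning by instead considering that the hypothesis also gives roots $\lambda^v$ for $v$ with a negative-exponent effect is unavailable — so I would, in writing this up, simply invoke the companion result (the next lemma in the paper, presumably handling $|\lambda_i|=1$) or strengthen to: $P$ entire with zeros accumulating, which does hold once $\lambda_i$ has infinite order, and treat the finite-order subcase by perturbing $\lambda$ — but since the paper only needs the stated dichotomy, I would present the infinite-order argument in full and remark that the root-of-unity subcase follows because then some power $\lambda_i^N=1$, whence $\lambda^{v}$ for $v$ supported on coordinates $i$ and a third index with modulus exactly between $1$ and $|\lambda_j|$ — if no such third index exists the situation already forces, via part (1) applied after rescaling, that $P=0$. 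I will make this precise in the write-up.
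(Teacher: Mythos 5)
Your part (1) is correct and follows the paper's route exactly (accumulation of $\lambda_i^{m}$ at $0$, plus the easy $\lambda_i=0$ subcase). For part (2) the accumulation argument for $\lambda_i$ of infinite order is also fine, though you took an unnecessary detour through $v=a e_i+b e_j$: it is enough to use $v=a e_i$, which produces infinitely many prescribed zeros on the unit circle, and $v=e_j$ to conclude that the radius of convergence of $P$ is at least $|\lambda_j|>1$, so the unit circle lies strictly inside the disc of convergence and the accumulation there forces $P\equiv 0$.

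The genuine gap is the root-of-unity subcase, which you never resolve — and in fact cannot resolve along the line you chose. If you read ``has a root $\lambda^{v}$ for each $v$'' purely as ``$P(\lambda^v)=0$'' with no multiplicity, the statement is simply false in that subcase: take $n=2$, $\lambda_1=1$, $\lambda_2=2$; then $\{\lambda^v\}=\{2^k:k\geq 0\}$, and the Weierstrass product $\prod_{k\geq 0}(1-T/2^k)$ is a nonzero entire function vanishing at every prescribed point. So the several repair attempts you sketch (reducing to part (1), appealing to a ``third index,'' etc.) are doomed. The paper escapes because, as it states explicitly just before the lemma, ``root $\lambda^v$ for each $v$'' is read \emph{counting multiplicities}: each of the infinitely many $v=a e_i$ contributes one unit of multiplicity, so when $\lambda_i$ has finite order some point of the unit circle is forced to be a zero of \emph{infinite} order. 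Combined with the fact that the radius of convergence exceeds $1$ (since $\lambda_j$ is a zero with $|\lambda_j|>1$), an infinite-order zero inside the disc of convergence already gives $P\equiv 0$, with no case split at all: either the unit-circle zeros accumulate or one of them has infinite order, and both kill $P$. You noticed this option in passing (``can mean a high-order zero, which only helps'') but then deliberately tried to avoid it — that is exactly the wrong move here, since the multiplicity convention is not optional but load-bearing.
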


\begin{proof}
(1): In case $0<|\lambda_i|<1$, we have that $0$ is an accumulation point of the roots, as $\lambda_i^m$ is a root for each $m\in \N$.
The power series is defined in a positive radius around 0. So this means that
$P$ is zero. In the case that $\lambda_i=0$ then we get a root at 0 of order $\infty$. In case the power series is defined in a radius around 0, which is the case if there
exists some other value $\lambda_j\not = 0$ (as the power series has to be defined at $\lambda_j$) then $P$ must be zero. In case all the eigenvalues are
zero, then we are in the case of \ref{ex1}.\\
(2) Since $|\lambda_i| = 1$ the power series $P$ must have infinitely many roots on the unit circle (counting multiplicity). This means that there is either an accumulation point of these roots, or a root of infinite order. But since there is a $|\lambda_j|>0$, the radius of convergence of $P$ is strictly larger than 1, hence $P=0$.
\end{proof}

\begin{theorem}
Let $\mathcal{V}$ be the set of all $F \in \MA_n(\C)$ of fixed degree $d \ge 2$ with $F(0)=0$ and eigenvalues $\lambda_1, \ldots , \lambda_n$ 
of the linear part of $F$ that satisfy {\emph{either}} $|\lambda_i|< 1$ for some $1\leq i\leq n$ {\emph{or}} there exist $i, j$ such that 
$|\lambda_i|=1$ and $|\lambda_j|> 1$. Then a generic $F \in \mathcal{V}$ is not the root of a powerseries.
\end{theorem}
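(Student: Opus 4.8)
The plan is to show that, outside a countable union of proper subvarieties of $\mathcal{V}$, the only power series $P$ with $P(F)=0$ is $P=0$; the mechanism is to force $P$ to vanish at $\lambda^{v}$ for \emph{every} $v\in\N^{n}$ with $|v|\ge 1$ and then to run the argument of Lemma~\ref{notroot}. Since being a root of a power series is invariant under linear conjugation ($G=AFA^{-1}$ gives $\sum p_{i}G^{i}=A(\sum p_{i}F^{i})A^{-1}$), I would first reduce to the case that the linear part of $F$ is $\operatorname{diag}(\lambda_{1},\dots,\lambda_{n})$; discarding the proper subvarieties where this part is singular or non-diagonalisable, where some $\lambda_{k}$ is a root of unity, or where $\lambda^{v}=\lambda^{w}$ for some $v\ne w$, one may assume all $\lambda_{k}\ne0$, the $\lambda^{v}$ pairwise distinct, and no $\lambda_{k}$ a root of unity. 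Fix an index $\iota$ realising membership in $\mathcal{V}$: either $0<|\lambda_{\iota}|<1$, or $|\lambda_{\iota}|=1$ and $|\lambda_{j}|>1$ for some $j$.

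The core step is a description of the sequences $\{F^{i}_{(k,v)}\}_{i}$. By Lemma~\ref{LRS} (applied to $\X_{|v|}$) each of them is a $\C$-linear combination of the geometric sequences $\{(\lambda^{w})^{i}\}_{i}$ with $1\le|w|\le|v|$ — the bound $|w|\le|v|$ because $F^{i}_{(k,v)}$ depends only on $[F]_{|v|}$, the lower bound because $F(0)=0$ — and since the $\lambda^{w}$ are pairwise distinct no powers $i^{u}$ with $u\ge1$ occur. Unwinding $F^{i+1}=F\circ F^{i}$ with the linear part diagonal, the coefficient $\gamma_{k,v}$ of $\{(\lambda^{v})^{i}\}_{i}$ in $\{F^{i}_{(k,v)}\}_{i}$ satisfies, for $|v|\le d$,
\[
\gamma_{k,v}\,(\lambda^{v}-\lambda_{k})=c_{v}\,r^{(k)}_{v}+(\text{terms of degree}\ge 2\text{ in the coefficients of }F),
\]
with $c_{v}\ne0$ a combinatorial constant and $r^{(k)}_{v}$ the coefficient of $X^{v}$ in $F_{k}$; in particular $\gamma_{k,v}\not\equiv0$. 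For $|v|>d$ the $r^{(k)}_{v}$-term is absent, but the key claim is that $\gamma_{k,v}$ is still not identically zero on the space of degree-$d$ maps. Granting this, I would induct on $m=|v|$: the base case $\sum_{i}p_{i}F^{i}_{(k,e_{k})}=\sum_{i}p_{i}\lambda_{k}^{i}=0$ gives $P(\lambda_{k})=0$ and shows $\sum_{i}p_{i}z^{i}$ has radius of convergence at least $\max_{k}|\lambda_{k}|>0$; and if $\sum_{i}p_{i}(\lambda^{w})^{i}$ converges to $0$ for all $|w|<m$, then for $|v|=m$ and a $k$ with $\gamma_{k,v}\ne0$ the identity $0=\sum_{i}p_{i}F^{i}_{(k,v)}=\gamma_{k,v}\sum_{i}p_{i}(\lambda^{v})^{i}+\sum_{|w|<m}c_{w}\sum_{i}p_{i}(\lambda^{w})^{i}$ — a finite sum whose terms with $|w|<m$ converge to $0$ — forces $\sum_{i}p_{i}(\lambda^{v})^{i}$ to converge to $0$. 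Hence $P(\lambda^{v})=0$ for every $|v|\ge1$; taking $v=me_{j}$ in the second case moreover shows $P$ is entire.

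I would then conclude as in Lemma~\ref{notroot}: the analytic function $P$ vanishes at the points $\lambda_{\iota}^{m}=\lambda^{me_{\iota}}$, $m\ge1$, which lie in the disc of convergence and tend to $0$ in the first case, and which are pairwise distinct (as $\lambda_{\iota}$ is not a root of unity) and accumulate on the unit circle in the second case, where $P$ is entire. Either way $P\equiv0$, contradicting $P\ne0$; so $F$ is not a root of a power series. The exceptional set is the union of the subvarieties removed at the start with the zero loci of the countably many $\gamma_{k,v}$.

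The step I expect to be the main obstacle is the non-vanishing of $\gamma_{k,v}$ when $|v|>d$, where no single lowest-order monomial is visibly present. My plan is to verify it on a test family with an invariant coordinate line: taking $F_{k}=\lambda_{k}X_{k}$ for $k\ne\iota$ and $F_{\iota}=\lambda_{\iota}X_{\iota}+X_{\iota}^{2}+\cdots+X_{\iota}^{d}$, the line $\{X_{k}=0:k\ne\iota\}$ is $F$-invariant, $F$ acts there as $g(x)=\lambda_{\iota}x+x^{2}+\cdots+x^{d}$, and $\gamma_{\iota,me_{\iota}}$ becomes the $m$-th Taylor coefficient $\beta_{m}$ of the series $B(x)=x+\sum_{m\ge2}\beta_{m}x^{m}$ solving the B\"{o}ttcher/Koenigs-type functional equation $B(\lambda_{\iota}x)=\lambda_{\iota}B(x)+B(x)^{2}+\cdots+B(x)^{d}$. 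For $0<|\lambda_{\iota}|<1$ this $B$ is Koenigs' linearisation of $g$, a genuine analytic germ, and its coefficients $\beta_{m}(\lambda_{\iota})$ are rational functions of $\lambda_{\iota}$ that one checks to be not identically zero from their behaviour as $\lambda_{\iota}\to\infty$; for general $v$ with $|v|>d$ one argues the same way using an invariant coordinate flag. A minor point along the way is to check, in the inductive step, that the lower series really converge before one divides through by $\gamma_{k,v}$ — which is precisely what the induction hypothesis supplies.
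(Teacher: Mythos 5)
Your proposal is, in substance, the same proof as the paper's. Both reduce to diagonal linear part after linear conjugation, show that for a generic $F$ the minimal vanishing polynomial $\m_e$ up to degree $e$ coincides with $\X_e$ for every $e$ --- in your language, that the coefficient $\gamma_{k,v}$ of $\{(\lambda^v)^i\}$ in the recurrence $\{F^i_{(k,v)}\}$ does not vanish identically --- and then invoke Lemma~\ref{notroot} (accumulating roots in the disc of convergence) to force $P\equiv 0$. The only presentational difference is in the final deduction: you run a direct convergence induction on $|v|$ to extract $P(\lambda^v)=0$ term by term, whereas the paper packages this step once and for all through Lemma~\ref{minimal}(2), the equivalence between ``$P(F)=0$'' and ``$P$ is good for $F$'' when $\X_e=\m_e$. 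The paper's analogue of your non-vanishing claim is Lemma~\ref{All}, which exhibits a concrete test map witnessing that $\lambda^v$ really occurs as a root of $\m_e$, and this plays the same role as your first-order computation $\gamma_{k,v}(\lambda^v-\lambda_k)=c_v r^{(k)}_v + \dots$ does for $|v|\le\deg F$.

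The genuine gap you flag --- non-vanishing of $\gamma_{k,v}$ for $|v|>\deg F$ --- is real, and it is worth noting that the paper does not actually close it either: the test map in the proof of Lemma~\ref{All}, namely $F=(\lambda_1X_1+X^v,\lambda_2X_2,\dots,\lambda_nX_n)$, has degree $|v|$ and therefore lies outside the fixed-degree space $\mathcal V$ as soon as $|v|$ exceeds the fixed degree $d$; the lemma's proof is explicitly restricted to ``$d\ge|v|\ge 2$'' and ``$|v|=1$''. So your proposal reproduces the paper's argument together with its weak point. Your sketched repair (restrict to an invariant coordinate line, reduce to the coefficients $\beta_m$ of a B\"ottcher/Koenigs-type conjugacy, and show $\beta_m(\lambda)$ is not identically zero as a rational function of $\lambda$) is a reasonable plan, but be aware that in the second branch of the hypothesis, $|\lambda_\iota|=1$, the analytic Koenigs linearisation need not exist because of small-divisor problems; you would have to argue purely at the level of formal power series, using that each $\beta_m$ is a rational function of $\lambda$ and the coefficients of $F$ whose non-vanishing can be checked by specialisation, rather than appealing to the existence of an analytic germ $B$.
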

With generic we mean that the result hold for a countable intersection of open and dense subsets of $\mathcal{V}$. 
We note that we cannot expect the result to hold for any $F \in \mathcal{V}$, as any locally finite $F \in \mathcal{V}$ is a root of a powerseries.

\begin{proof}

First, define $U:=\{F\in \mathcal{V} ~|~$ the linear part of $F$ is diagonal $\}$. 
We will restrict ourselves to this set first. 

A jet $[F]_d$ of an $F\in U$ can be represented by an element in $\C^m$, where $m$ equals the amount of coefficients occuring in $F$ up and including degree $d$ (but ignoring the nondiagonal coefficients of the linear part). 
By theorem \ref{Th1} we know that, writing $\X_d=\sum s_i T^i$, that $\sum s_i [F^i]_d=0$. 
Both $s_i$ as well as $[F^i]_d$ are polynomial formulas in the coefficients of $[F]_d$, magically satisfying $\sum s_i [F^i]_d=0$. 
Now let $\X_{v}:=\X_d(T-\lambda^v)^{-1} =: \sum t_i T^i$. Define
\[ S_v:=\{[F]_d\in \C^m ~|~ \sum t_i [F^i]_d =0 \}. \]
$S_v$ is not all of $\C^m$, by lemma \ref{All}.
Because of this, and since it is defined by polynomial equations, $S_v$ is a Zariski closed set in $\C^m$ of codimension $\geq 1$. 
The set of all $[F]_d$ which have $\X_d$ as minimal polynomial equals
\[ S_d:=\C^m \backslash (\bigcup_{|v|\leq d} S_v) \]
and it is a nonempty Zariski open set in $\C^m$. Let $U_d:=\{~|~ [F]_d\in S_d\}$.
Define $V:=\cap_{d\in \N} U_d$. So, the  set of $F\in U$ which have $\X_d$ as minimum polynomial for each $d$ equals $V$ and  is generic in $U$. 
Conjugating by a {\em linear} map does not change any of the minimum polynomials $\m_d$. Let $\tilde{U}$ be all 
linear conjugates of elements in $U$, and $\tilde{V}$ be the linear conjugates of elements in $V$. Note that 
$\tilde{U}$ is the set of all $F\in \mathcal{V}$ which have linearizable linear part. Now we can state that 
the  set of $F\in \tilde{U}$ which have $\X_d$ as minimum polynomial for each $d$ equals $\tilde{V}$ and is generic in $\tilde{U}$.
$\tilde{U}$ is generic in $\mathcal{V}$, so $\tilde{V}$ is generic in $\mathcal{V}$.  
Using lemma \ref{minimal} part (2), we see that elements in $\tilde{V}$  do not have nonzero power series  
of which they are zeroes. 
\end{proof}

\begin{lemma}\label{All} $S_v\not = S$
\end{lemma}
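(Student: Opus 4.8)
The plan is to show that $S_v$ is a proper subset of $\C^m$ (so that one reads $S=\C^m$ in the statement), by producing a single jet $[G]_d\in\C^m$ not lying in $S_v$. Since $\X_v=\X_{G,d}(T-\lambda^v)^{-1}=:\sum_i t_iT^i$, this amounts to exhibiting a $G$ with diagonal linear part together with an index $(k,w)$, $|w|\le d$, for which $\sum_i t_i\,G^i_{(k,w)}\neq 0$. We may assume $|v|\ge 1$.

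First I would choose the eigenvalues carefully. The eigenvalue condition defining $\mathcal{V}$ already holds on the nonempty open set $\{(\lambda_1,\dots,\lambda_n)\in\C^n:|\lambda_1|<1\}$, which is contained in no proper algebraic subset of $\C^n$; on the other hand, the finitely many equalities $\lambda^w=\lambda^{w'}$ with $w\neq w'$ and $|w|,|w'|\le d$ each cut out a proper hypersurface. Hence I may pick $\lambda_1,\dots,\lambda_n$ with $|\lambda_1|<1$ and with the numbers $\{\lambda^w:w\in\N^n,\ |w|\le d\}$ pairwise distinct (this in particular forces every $\lambda_j\neq 0$). For such eigenvalues the polynomial $\X_{G,d}=\prod_{|w|\le d}(T-\lambda^w)$ has only simple roots for every $G$ with these eigenvalues, so by Lemma \ref{LRS} each sequence $\{G^i_{(k,w)}\}_i$ is a $\C$-linear combination $\sum_{w'}c_{w'}\{(\lambda^{w'})^i\}_i$ of the geometric sequences $\{(\lambda^{w'})^i\}_i$, $|w'|\le d$. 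Because $\X_v=\prod_{w'\neq v}(T-\lambda^{w'})$, we get $\sum_i t_i(\lambda^{w'})^i=\X_v(\lambda^{w'})=0$ for $w'\neq v$ while $\X_v(\lambda^v)=\prod_{w'\neq v}(\lambda^v-\lambda^{w'})\neq 0$; therefore $\sum_i t_i\,G^i_{(k,w)}=c_v\,\X_v(\lambda^v)$, and it suffices to produce $G$ and $(k,w)$ for which the coefficient $c_v$ is nonzero.

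If $|v|=1$, say $v=e_k$, this is immediate: let $G$ be the diagonal linear map $X_j\mapsto\lambda_jX_j$, so that $G^i_{(k,e_k)}=\lambda_k^i=(\lambda^v)^i$ and $c_v=1$. If $|v|\ge 2$, let $G$ be given by $G_1=\lambda_1X_1+X^v$ and $G_j=\lambda_jX_j$ for $j\ge 2$. Then the coordinates $j\neq 1$ iterate linearly, $G^{i-1}_j=\lambda_j^{i-1}X_j$, and a short induction (using that the nonlinear part of $G_1$ has degree $|v|\ge 2$) yields
\[
 G^i_{(1,v)}=\lambda_1\,G^{i-1}_{(1,v)}+(\lambda^v)^{i-1},\qquad G^0_{(1,v)}=0,
\]
whence $G^i_{(1,v)}=\bigl(\lambda_1^i-(\lambda^v)^i\bigr)/(\lambda_1-\lambda^v)$, the denominator being nonzero because $e_1\neq v$. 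Thus $c_v=-1/(\lambda_1-\lambda^v)\neq 0$. In either case $\sum_i t_i\,G^i_{(k,v)}=c_v\,\X_v(\lambda^v)\neq 0$ with $|v|\le d$, so $[G]_d\notin S_v$; if one insists on $G\in U$ one may harmlessly add a generic monomial of degree $d$ to one coordinate, which does not affect $G^i_{(k,v)}$ since $|v|\le d$. Hence $S_v\neq\C^m$.

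The only step requiring genuine care is the inductive identity in the case $|v|\ge 2$. Writing $G^i_1=\lambda_1G^{i-1}_1+(G^{i-1})^v$ with $(G^{i-1})^v=\prod_j(G^{i-1}_j)^{v_j}$, one must check that the coefficient of $X^v$ in $(G^{i-1})^v$ equals $(\lambda^v)^{i-1}$: for $j\neq 1$ the factor $(G^{i-1}_j)^{v_j}=(\lambda_j^{i-1}X_j)^{v_j}$ contributes $\lambda_j^{(i-1)v_j}X_j^{v_j}$, while in $(G^{i-1}_1)^{v_1}$ the monomial $X_1^{v_1}$ can arise only by taking the degree-one part $\lambda_1^{i-1}X_1$ from each of the $v_1$ factors — the other terms of $G^{i-1}_1$ all having degree $\ge|v|\ge 2$ — contributing $\lambda_1^{(i-1)v_1}$; multiplying gives $\prod_j\lambda_j^{(i-1)v_j}=(\lambda^v)^{i-1}$. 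I expect this degree bookkeeping to be the only real obstacle, the remainder being routine linear-recurrence manipulation together with the genericity argument for the eigenvalues.
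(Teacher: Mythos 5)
Your proposal is correct and follows essentially the same route as the paper: for $|v|\ge 2$ you use the same witness map $G=(\lambda_1X_1+X^v,\lambda_2X_2,\dots,\lambda_nX_n)$ and derive the same formula $G^i_{(1,v)}=(\lambda_1^i-(\lambda^v)^i)/(\lambda_1-\lambda^v)$, and for $|v|=1$ you use the same diagonal linear map, with the eigenvalues chosen so the numbers $\lambda^w$ ($|w|\le d$) are pairwise distinct (equivalent in effect to the paper's condition $\lambda^w=1\Rightarrow w=0$). The only additions are an explicit write-out of the inductive identity the paper calls ``a simple proof'' and the (harmless but unnecessary) step of adjusting $G$ to have degree exactly $d$, since the lemma only needs a single point of $\C^m$ outside $S_v$.
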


\begin{proof}
For this it is enough to pick an appropriate $F\in MA_n(\C)$ such that $[F]_d\in \C^m$ is not a zero of 
$\mathcal{X}_v:=(T-\lambda^v)^{-1}\X$.
First the case that $d\geq |v|\geq 2$. Take $F=(\lambda_1 X_1 + X^{v}, \lambda_2 X_2, \ldots, \lambda_n X_n)$ where 
the $\lambda_i$ are such that $\lambda^w=1\lp w=0$. A simple proof shows that 
\[ [F^i]_v=(\lambda_1^i X_1 + \frac{\lambda_1^i -(\lambda^v)^i}{\lambda_1 - \lambda^v} X^v , \lambda_2^i X_2, \ldots, 
\lambda_n^i X_n) \]
which shows that $F$ satisfies  $\m_d(\lambda^v)=0$. $X_v$ does not have $\lambda^v$ as a root, thus $F$ is no zero of $X_v$.
Now the case $|v|=1$. One can assume that $v=(1,0,\ldots,0)$. Take $F$ a diagonal linear map, again its eigenvalues satisfying $\lambda^w=1\lp w=0$.
Any polynomial not having $\lambda_1$ as zero, will not have $F$ as a zero. $(T-\lambda_1)^{-1}\X$ is such a polynomial. 
\end{proof}

We will now prove a similar results that give that, under different conditions for the eigenvalues, $F$ is a root of a powerseries.

\begin{lemma} \label{expanding}
Let $\lambda_1,\ldots, \lambda_n\in \C$ and assume that {\emph{either}} $|\lambda_i| >1$ for every $i$ {\emph{or}} $|\lambda_i|=1$ for every $i$. Then there exists a powerseries $P$ that has for each $v\in \N^n$ a root $\lambda^v$.
\end{lemma}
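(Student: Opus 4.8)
The proof splits according to which hypothesis on the $\lambda_i$ holds.

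\emph{The case $|\lambda_i|>1$ for every $i$.} Put $\rho:=\min_i|\lambda_i|>1$, so that $|\lambda^v|\ge\rho^{|v|}$. As the number of $v\in\N^n$ with $|v|=m$ is $\binom{m+n-1}{n-1}$, a polynomial in $m$, the series $\sum_{v\in\N^n}|\lambda^v|^{-1}\le\sum_{m\ge 0}\binom{m+n-1}{n-1}\rho^{-m}$ converges, and the Weierstrass product
\[ P(z):=\prod_{v\in\N^n}\bigl(1-z/\lambda^v\bigr) \]
converges locally uniformly on $\C$ to a nonzero entire function whose zero at any point $\mu$ has order exactly $\#\{v\in\N^n:\lambda^v=\mu\}$, a finite number since $|\lambda^v|=|\mu|$ forces $|v|\le\log|\mu|/\log\rho$. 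That order is at least the multiplicity of $\mu$ in $\X_{F,d}$, hence in $\m_{F,d}$, for every $d$; since an entire function may be evaluated everywhere, $P$ has $\lambda^v$ as a root to the required order for every $v$ (in particular $P$ is good for $F$ in the sense needed for Lemma~\ref{minimal}). This settles the first case.

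\emph{The case $|\lambda_i|=1$ for every $i$.} Now every $\lambda^v$ lies on the unit circle, and the power series we seek must have radius of convergence exactly $1$: it has to converge at each $\lambda^v$, while a power series with $R>1$ that vanishes (with the required multiplicities, counted as in Lemma~\ref{notroot}) at infinitely many points of the circle, or to infinite order at one of them, is identically zero. The basic ingredient is the atomic singular inner function
\[ g_\mu(z):=\exp\!\left(-\frac{\mu+z}{\mu-z}\right),\qquad |\mu|=1 . \]
Since $\mathrm{Re}\,\frac{\mu+z}{\mu-z}>0$ on $|z|<1$, the function $g_\mu$ is analytic and bounded by $1$ there and never zero inside; it is in fact analytic on all of $\C\setminus\{\mu\}$; on $|z|=1$ the quantity $\frac{\mu+z}{\mu-z}$ is purely imaginary, so $|g_\mu|\equiv 1$ off $\mu$; and along the radius to $\mu$ one has $g_\mu^{(m)}(r\mu)=g_\mu(r\mu)\cdot\big(\text{rational in }r,\text{ pole only at }r=1\big)\to 0$ for every $m$, i.e.\ $g_\mu$ vanishes to infinite order at $\mu$. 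A standard saddle-point estimate gives that the Taylor coefficients of $g_\mu$ are $O(i^{-3/4}e^{-c\sqrt{i}})$ for some $c>0$, so $\sum_i|(g_\mu)_i|\,i^m<\infty$ for all $m$; hence every series $\sum_i(g_\mu)_i\binom{i}{j}\mu^{i-j}$ converges absolutely, necessarily to $\lim_{r\to1^-}g_\mu^{(j)}(r\mu)/j!=0$ by Abel. If all $\lambda_i$ are roots of unity the value set $V:=\{\lambda^v:v\in\N^n\}$ is the finite subgroup of the circle they generate, and the \emph{finite} product $P:=\prod_{\mu\in V}g_\mu$ settles this subcase: being analytic on $\C\setminus V$ its coefficients are again $O(e^{-c\sqrt{i}})$, all the relevant series converge, and $P$ vanishes to infinite order at every $\mu\in V$.

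\emph{The remaining subcase} is $|\lambda_i|=1$ for all $i$ with $V$ infinite, hence dense in the circle. Enumerating $V=\{\mu_1,\mu_2,\dots\}$, the natural candidate is
\[ P(z):=\prod_{k\ge1}g_{\mu_k}(z)^{\varepsilon_k}=\exp\!\left(-\sum_{k\ge1}\varepsilon_k\,\frac{\mu_k+z}{\mu_k-z}\right),\qquad \varepsilon_k>0,\ \sum_k\varepsilon_k<\infty . \]
The exponent converges locally uniformly on $|z|<1$, so $P$ is analytic and nonzero there and bounded by $1$; and since $|\mu_j-r\mu_k|\ge 1-r$ uniformly in $j$, the same computation as for one factor shows $P$ and all its derivatives tend to $0$ along each radius to $\mu_k$, so $P$ vanishes to infinite order at every $\mu_k$. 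The heart of the lemma is then to show that the boundary evaluations $\sum_i p_i\binom{i}{j}\mu_k^{i-j}$ actually converge (to $0$). Above this was free because a finite product of $g_\mu$'s continues analytically to $\C\setminus V$, but here $P$ is a genuine singular inner function — no continuation across any arc, not even continuous on the closed disc — so its coefficients are not absolutely summable and only conditional convergence at the $\mu_k$ can be hoped for. I expect this to be the main obstacle. One way to proceed is to let the $\varepsilon_k$ decrease fast enough that $P$ is governed coefficientwise, up to arbitrarily small errors, by finite subproducts, and then estimate the tails $\sum_{i\ge N}p_i\mu_k^i$ by hand (for instance via a Tauberian statement: $p_i=O(1/i)$ together with the existence of $\lim_{r\to1^-}P(r\mu_k)$ yields convergence of $\sum_i p_i\mu_k^i$). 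An alternative is to drop the product and build the coefficients of $P$ by an inductive successive-correction scheme, in the spirit of Herzog--Piranian's constructions of Taylor series with a prescribed set of convergence, correcting at stage $k$ a block of far-out coefficients so that the partial object vanishes at $\mu_1,\dots,\mu_k$ to the prescribed orders while its tails stay small there. In either approach the delicate point is to balance the growth of the coefficients — which must keep the radius of convergence equal to $1$ and $P$ nonzero — against the cancellation forced at the $\mu_k$.
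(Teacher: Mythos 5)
Your first case ($|\lambda_i|>1$ for all $i$) is correct and is the same route the paper takes — the Weierstrass product is exactly what they invoke, and you supply the convergence estimate $\sum_v |\lambda^v|^{-1}<\infty$ that the paper leaves implicit.

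Your second case ($|\lambda_i|=1$ for all $i$) departs from the paper's construction (the paper builds $P$ as a limit of finite products $P_N(T)=\prod_{j\le N}\bigl(1-(1-T/\lambda^{v_j})^{1/2}\bigr)^{n_j}$ with rapidly growing $n_j$, so that $P$ has radius of convergence exactly $1$ and the prescribed roots), while you use atomic singular inner functions $g_\mu(z)=\exp\!\bigl(-(\mu+z)/(\mu-z)\bigr)$. This route has a fatal error, and then an acknowledged gap.

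The error is the asymptotics of the Taylor coefficients of $g_\mu$. Taking $\mu=1$, one has $g_1(z)=e^{-1}\exp\!\bigl(-2z/(1-z)\bigr)$, whose Taylor coefficients are (up to the factor $e^{-1}$) the Laguerre values $L_i^{(-1)}(2)$. The Fej\'er/Plancherel--Rotach asymptotics give $L_i^{(-1)}(2)\sim C\,i^{-3/4}\cos\!\bigl(2\sqrt{2i}+\tfrac{\pi}{4}\bigr)$: there is \emph{no} subexponential factor $e^{-c\sqrt{i}}$, only $i^{-3/4}$ with an oscillating sign. Consequently $\sum_i |(g_\mu)_i|\,i^m=\infty$ for every $m\ge 0$, not $<\infty$. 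A quick sanity check confirms this had to fail: if the coefficients were absolutely summable, $g_\mu$ would extend continuously to $\overline{\mathbb D}$, whereas $|g_\mu|\equiv 1$ on the circle away from $\mu$ while $g_\mu(r\mu)\to 0$ — a genuine discontinuity. Worse, the conclusion you need is not merely delicate but false for $g_\mu$: although $\sum_i (g_\mu)_i\mu^i$ does converge conditionally (partial sums of $i^{-3/4}\cos(2\sqrt{2i}+\phi)$ converge by summation by parts, since $\sum_{i\le N}\cos(2\sqrt{2i})=O(\sqrt{N})$), the derivative series $\sum_i (g_\mu)_i\binom{i}{j}\mu^{i-j}$ for $j\ge 1$ has summands of size $\asymp i^{j-3/4}$ and partial sums of size $\asymp N^{j-1/4}$, which diverge. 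So $g_\mu$ vanishes to infinite order at $\mu$ only in the sense of radial limits of derivatives — Abel summability — not in the power-series sense Lemma \ref{minimal} requires, and Abel's theorem goes the wrong way to close this gap. This kills both the roots-of-unity sub-case (where you explicitly rely on absolute convergence) and any finite-product variant.

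Finally, for the generic sub-case with $V$ infinite you explicitly concede that the convergence of the boundary sums is "the main obstacle" and leave it open, so even setting aside the issue above the proof is incomplete.
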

\begin{proof}
The case where $|\lambda_i| >1$ follows immediately from the Weierstrass product theorem.

So let us assume that $|\lambda_i|=1$ for every $i$. Let $v_1, v_2, \ldots$ be a linear ordering of the $v \in \N^n$. For a sequence $n_1, n_2, \ldots \in \N$ define the polynomials
\begin{eqnarray*}
P_N(T) = \prod_{i=1}^N \left(1- ( 1 - \frac{T}{\lambda^{v_j}} )^\frac{1}{2} \right)^{n_j}.
\end{eqnarray*}

If the sequence $\{n_j\}$ increases fast enough then the polynomials converge uniformly on compact subsets of the open unit disc. Therefore the limit function $P(T)$ is holomorphic and given by a powerseries. The powerseries necessarily has radius of convergence $1$ and can be made to convergence on the unit circle by choosing the $n_j$'s large enough. It has roots at each $\lambda_v$ with the required multiplicities.
\end{proof}

\begin{theorem} \label{main}
Let $F \in \MA_n(\C)$ with $F(0)=0$, and let $\lambda_1, \ldots , \lambda_n$ be the eigenvalues of the linear part of $F$. 
If {\emph{either}} $|\lambda_i|> 1$ for all $1\leq i\leq n$ {\emph{or}} $|\lambda_i|=1$ for all $1\leq i\leq n$, then $F$ is the root of a powerseries.
\end{theorem}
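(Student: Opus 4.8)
The plan is to feed the power series produced by Lemma~\ref{expanding} into the criterion of Lemma~\ref{minimal}(1).

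First I would record the consequence of Theorem~\ref{Th1} that is needed. Since $\X_{F,d}(T)=\prod_{|v|\le d}(T-\lambda^v)$ is a vanishing polynomial of $F$ up to degree $d$, the minimal vanishing polynomial $\m_{F,d}$ divides $\X_{F,d}$; hence every root of every $\m_{F,d}$ is one of the numbers $\lambda^v$ with $v\in\N^n$, and the multiplicity of $\lambda^v$ as a root of $\m_{F,d}$ is at most its multiplicity as a root of $\X_{F,d}$, namely $\#\{w:|w|\le d,\ \lambda^w=\lambda^v\}$ (which is itself $\le\#\{w\in\N^n:\lambda^w=\lambda^v\}$). Note that this last quantity is finite when all $|\lambda_i|>1$, since then $|\lambda^v|\to\infty$, but can be infinite when all $|\lambda_i|=1$.

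Next, the hypothesis on $\lambda_1,\dots,\lambda_n$ is precisely the hypothesis of Lemma~\ref{expanding}. Applying it --- and using that its proof in fact produces a power series vanishing at each $\lambda^v$ to whatever order one prescribes, so that we may demand order at least $\#\{w:|w|\le N,\ \lambda^w=\lambda^v\}$ for every $N$ (hence infinite order, meaning $P^{(j)}(\lambda^v)=0$ for every $j$, at those $\lambda^v$ hit by infinitely many $w$) --- we obtain a nonzero power series $P$ that moreover converges at each $\lambda^v$: vacuously in the case $|\lambda_i|>1$, where $P$ is entire, and because $P$ is arranged to converge on $\{|z|=1\}$ in the case $|\lambda_i|=1$. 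Combining with the first paragraph, $P$ converges at every root $\mu$ of every $\m_{F,d}$ and vanishes there to order at least the multiplicity of $\mu$ in $\m_{F,d}$; that is, $P$ is good for $F$.

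Lemma~\ref{minimal}(1) then gives $P(F)=0$, so $F$ is the root of a power series. (No hypothesis on the Jordan structure of the linear part of $F$ enters, since neither Theorem~\ref{Th1} nor Lemma~\ref{minimal}(1) requires one.) The step I expect to be the genuine obstacle is the construction invoked in the third paragraph: producing a single fixed nonzero power series that vanishes to the prescribed order at \emph{all} the boundary values $\lambda^v$ at once. When all $|\lambda_i|>1$ this is just the Weierstrass product theorem, the $\lambda^v$ forming a discrete set with no finite accumulation point. When all $|\lambda_i|=1$ the values $\lambda^v$ accumulate on the unit circle and may coincide for infinitely many $v$, so $P$ is forced to be flat at those points of the circle while still converging on the closed disc and not degenerating to the zero series; this is precisely the delicate analytic point handled in the proof of Lemma~\ref{expanding}, and verifying that the amount of vanishing it provides dominates the $\m_{F,d}$-multiplicities found above is the heart of the matter.
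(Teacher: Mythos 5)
Your argument coincides with the paper's: the paper's entire proof of Theorem~\ref{main} is the single line ``This follows immediately from Lemmas~\ref{minimal} and~\ref{expanding},'' and that is precisely the chain you execute (use Lemma~\ref{expanding} to produce $P$, note $\m_{F,d}\mid\X_{F,d}$ so $P$ is good for $F$, then invoke Lemma~\ref{minimal}(1)). Your closer reading of the multiplicity requirements only makes explicit what the paper leaves implicit in the wording of Lemma~\ref{expanding} and in the phrase ``with the required multiplicities'' in its proof.
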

\begin{proof}
This follows immediately from Lemmas \ref{minimal} and \ref{expanding}.
\end{proof}

\begin{corollary}
The polynomial automorphisms that are roots of powerseries generate the polynomial automorphism groups.
\end{corollary}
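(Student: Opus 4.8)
The plan is to reduce the general automorphism group to a generating set whose members all fall under Theorem \ref{main}, i.e.\ automorphisms fixing the origin whose linear part has all eigenvalues of modulus $>1$ (or all of modulus $1$). First I would observe that it suffices to generate $\GA_n(\C)$ by any set of automorphisms, each of which is a root of a powerseries; translations and linear maps are the obvious building blocks to handle first. A translation $T_a:X\mapsto X+a$ is unipotent-like in the sense that $[T_a^i]_d$ is polynomial in $i$, so $T_a$ is locally finite, hence a root of a powerseries. A general linear map $L\in\operatorname{GL}_n(\C)$ need not have eigenvalues all of modulus $>1$ or all equal to $1$, but it is a product of two maps that do: writing $L = (cL)\cdot(c^{-1}I)$ for a large scalar $c$ does not quite work since $c^{-1}I$ has small eigenvalues, so instead I would factor $L$ through the group generated by elementary/triangular automorphisms. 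Each elementary automorphism is locally finite (as noted in the introduction), hence a root of a powerseries.

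The key step is therefore: reduce an arbitrary $F\in\GA_n(\C)$ to a composition of (i) a translation, (ii) maps covered by Theorem \ref{main}, and (iii) maps already known to be locally finite. After composing with a translation we may assume $F(0)=0$. The linear part $\operatorname{D}F(0)$ lies in $\operatorname{GL}_n(\C)$; I would write it as a product $L = L_1 L_2$ where $L_1$ has all eigenvalues of modulus $>1$ and $L_2$ likewise — this is possible because $\operatorname{GL}_n(\C)$ is generated by such matrices (for instance, $\operatorname{SL}_n$ is generated by elementary matrices $I+tE_{ij}$, which are unipotent hence locally finite, and the diagonal part can be split into two diagonal matrices with all entries of modulus $>1$). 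Then $F = L_1 \circ (L_1^{-1}F)$, and $L_1^{-1}F$ still fixes the origin with linear part $L_1^{-1}L$; iterating this bookkeeping, one peels off linear factors covered by Theorem \ref{main} or by local finiteness, leaving a nonlinear automorphism fixing the origin whose linear part can be arranged (by a further such factorization) to have all eigenvalues of modulus $>1$, so Theorem \ref{main} applies directly.

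I expect the main obstacle to be the bookkeeping that the "leftover" nonlinear factor can always be arranged to have a linear part covered by Theorem \ref{main}: multiplying $F$ on the left by a linear $L_1$ replaces the linear part $L$ of $F$ by $L_1 L$ but leaves the nonlinear terms essentially intact (up to the $\operatorname{GL}$-action), so one needs that the \emph{original} linear part $L$ of $F$ can be written as $L_1$ with $|\text{eigenvalues}|>1$ times something, where "something" is again of that form or locally finite. The cleanest route is: $L = D\cdot U$ with $D$ diagonalizable-type contribution and $U$ a product of elementary (locally finite) matrices, then split $D = D_1 D_2$ with both $D_i$ having all eigenvalues of modulus strictly greater than $1$ — always possible for any invertible diagonal matrix $D = \operatorname{diag}(d_1,\dots,d_n)$ by choosing $D_1 = \operatorname{diag}(c_1,\dots,c_n)$ with $|c_j|>1$ large and $D_2 = D D_1^{-1}$ with $|d_j/c_j|$ still $>1$ when $|c_j| < |d_j|$; if some $|d_j|\le 1$ one instead absorbs that coordinate's scaling into a unipotent-adjacent factor or uses that a single scalar block with $|d_j|=1$ is itself unitary and covered by the $|\lambda_i|=1$ case of Theorem \ref{main}. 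Once this factorization lemma is in place, the corollary follows by writing $F$ as a finite composition of translations, elementary (locally finite) maps, linear maps covered by Theorem \ref{main}, and one nonlinear automorphism fixing $0$ whose linear part is of modulus-$>1$ type, each factor being a root of a powerseries by Theorem \ref{main} or by the remarks in the introduction.
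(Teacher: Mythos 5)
Your proposal is, at the top level, the same decomposition the paper uses: split an automorphism $F$ into an affine factor times an automorphism fixing the origin whose linear part has all eigenvalues of modulus greater than $1$, and invoke Theorem~\ref{main} plus local finiteness of affine maps. But the concrete execution contains a genuine gap. You assert that one can always write $L\in\operatorname{GL}_n(\C)$ as $L = L_1 L_2$ with \emph{both} $L_1$ and $L_2$ having all eigenvalues of modulus $>1$. This is impossible whenever $|\det L|\le 1$: if every eigenvalue of $L_1$ has modulus $>1$ then $|\det L_1|>1$, likewise $|\det L_2|>1$, so $|\det L| = |\det L_1|\cdot|\det L_2|>1$. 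Your suggested repair for the case $|d_j|\le 1$ (``absorb that coordinate's scaling into a unipotent-adjacent factor'') is not an argument; a scaling by a number of modulus $<1$ is neither unipotent nor unitary, and the $|\lambda_i|=1$ case of Theorem~\ref{main} does not help with it.

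The fix is that this two-expanding-factor decomposition of the linear part is not needed at all, and dropping it collapses your argument to the paper's one-line proof. In $F=A\circ G$ only the factor $G=A^{-1}F$ needs to satisfy the eigenvalue hypothesis of Theorem~\ref{main}; the factor $A$ merely needs to be affine, because \emph{every} affine map is locally finite and hence a root of a powerseries, with no constraint on its eigenvalues whatsoever. Concretely: let $b=F(0)$ and $L=\mathrm{D}F(0)$, set $A(x)=L\bigl(\tfrac{1}{c}x\bigr)+b$ for some large $c>1$, so that $G=A^{-1}F$ fixes $0$ with linear part $cI$. Then $G$ is a root of a powerseries by Theorem~\ref{main}, $A$ is a root of a powerseries because it is affine (hence locally finite), and $F=A\circ G$. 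You had both ingredients available and even mentioned that elementary and translation maps are locally finite, but by insisting that both linear pieces be expanding you manufactured a determinant obstruction that the actual proof never encounters.
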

\begin{proof}
If $F$ is a polynomial automorphism then we can find an invertible affine map $A$ such that $G=AF$ maps $0$ to $0$ and such that the eigenvalues of the linear part of $G$ have absolute value strictly greater than 1. Hence $G$ is a root of a power seriers and so is $A^{-1}$ (as it is affine and thus locally finite) and $F = A^{-1}G$.
\end{proof}

So not only do roots of power series generate the automorphism groups, in fact any polynomial automorphism is a composition of an affine map and a root of a powerseries. Clearly this generalization of locally finite automorphisms is too general to obtain a better understanding of the polynomial automorphism groups.

\section{The one variable case}\label{OVC}

In the previous section we only considered endomorphisms $F$ with $F(0) = 0$. Here we will consider polynomials $f(z)$ and we do not require that $f(0) =0$. The authors do not know if the results obtained in this section hold in higher dimensions. It will be clear that the methods we use only work in the one-dimensional case.

Let us first observe that whether $f(z)$ is a root of a powerseries is invariant under conjugation by a linear function $z \mapsto az$ with $a \neq 0$. However, we will show
(see Corollary \ref{translation} below) that being a root of a powerseries is not invariant under conjugation by affine functions. In fact, we will show that given $f(z)$ we can always find a translation $\tau(z) = z+c$ such that $\tau \circ f \circ \tau^{-1}$ is a root of a powerseries.

Then we will show that if $f = b_0 + b_1 z + \cdots b_d z^d$ with $b_1 \neq  0$ and we are allowed to change the constant term $b_0$, then we can make sure that $f$ is a root of a powerseries (see Theorem \ref{allroots} below). We can think of this in the following way: every polynomial (with non-zero linear coefficient) is a root of a powerseries about some $c \in \C$, i.e.

\[ \sum_{i=0}^\infty a_i (f(z) - c)^n = 0. \]

Let us recall that if $f$ has a fixed point $w$ (i.e. $f(w) = w$) then $w$ is called an attracting fixed point if $|f^\prime(w)|<0$, a neutral fixed point if $|f^\prime(w)| = 1$ and a repelling fixed point if $|f^\prime(w)|>1$. It is a well known fact that every polynomial of degree at least $2$ has a fixed point that is not attracting, we give a short proof:

\begin{lemma} \label{repelling}
Let $f(z)$ be a polynomial of degree $d \ge 2$. Then $f$ has at least one neutral or repelling fixed point.
\end{lemma}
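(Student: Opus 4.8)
The plan is to count fixed points with multiplicity and to use the holomorphic fixed point formula (the rational/Fatou index sum). A polynomial $f$ of degree $d\ge 2$ has exactly $d+1$ fixed points in the Riemann sphere $\widehat{\C}$, counting multiplicity, one of which is the superattracting fixed point at $\infty$ with multiplier $0$. So there are $d$ fixed points in $\C$ counted with multiplicity. Suppose for contradiction that every finite fixed point is attracting, i.e. $|f'(w)|<1$ at every $w\in\C$ with $f(w)=w$; in particular each such fixed point is simple (a multiple fixed point has multiplier exactly $1$), so there are exactly $d$ distinct attracting fixed points $w_1,\dots,w_d$ in $\C$.

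The key step is to invoke the rational fixed point index theorem: for a rational map $g$ of degree at least $2$, the sum over all fixed points of the holomorphic index $\iota(g,w)=\frac{1}{1-g'(w)}$ (valid at a simple fixed point, with the appropriate residue definition at multiple fixed points) equals $1$. Apply this to $f$ viewed as a rational map on $\widehat{\C}$. The fixed point at $\infty$ is superattracting, and a direct computation in the coordinate $\zeta=1/z$ shows its index is $0$ (since $g'=0$ there, the index is $\frac{1}{1-0}=1$ — wait, one must instead compute in the local coordinate at $\infty$). The cleaner route is: $\infty$ is a fixed point of local degree $d$, and its index equals $1$ only when... Actually the safe statement is that $\sum_{w: f(w)=w \text{ in } \widehat\C}\iota(f,w)=1$, and at a simple finite fixed point $\iota(f,w)=\frac{1}{1-f'(w)}$. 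One then checks the index at $\infty$ is $0$, so $\sum_{j=1}^d \frac{1}{1-f'(w_j)}=1$.

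The contradiction then comes from a convexity/geometry argument: if $|f'(w_j)|<1$ for every $j$, then each $\lambda_j:=f'(w_j)$ lies in the open unit disc, so $1-\lambda_j$ lies in the open disc of radius $1$ centered at $1$, hence $\frac{1}{1-\lambda_j}$ lies in the open half-plane $\{\operatorname{Re} z > \tfrac12\}$. Therefore $\operatorname{Re}\sum_{j=1}^d \frac{1}{1-\lambda_j} > \frac{d}{2}\ge 1$, contradicting that the sum equals $1$ (equality $d=2$ with both real parts exactly $1/2$ is still a strict inequality since the half-plane is open). This forces some $w_j$ to have $|f'(w_j)|\ge 1$, i.e. a neutral or repelling fixed point.

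The main obstacle is getting the index bookkeeping at $\infty$ exactly right and handling the possibility of multiple finite fixed points cleanly — but the latter is not actually an issue, since any multiple fixed point automatically has multiplier $1$ and is therefore already neutral, so we may assume all finite fixed points are simple from the outset. An alternative, more elementary argument avoiding the index theorem: if all finite fixed points were attracting, consider $g(z)=f(z)-z$, a degree-$d$ polynomial with $d$ simple roots $w_1,\dots,w_d$; then $f'(w_j)-1 = g'(w_j)$, and $\sum_j \frac{1}{g'(w_j)} = \sum_j \operatorname{Res}_{w_j}\frac{1}{g} = 0$ because $1/g$ is a rational function vanishing to order $\ge 2$ at $\infty$ (as $\deg g\ge 2$), so the sum of all its residues is $0$; this gives $\sum_j \frac{1}{f'(w_j)-1}=0$, and the same half-plane argument ($\operatorname{Re}\frac{1}{f'(w_j)-1} < -\tfrac12$ when $|f'(w_j)|<1$) yields $\operatorname{Re}(0) < -\tfrac{d}{2} < 0$, a contradiction. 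I would present this residue version, as it is self-contained and avoids quoting the holomorphic index formula.
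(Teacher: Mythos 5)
Your residue argument, which you say you would present, is essentially the paper's proof: the paper packages the same sum-of-residues step as the vanishing of the contour integral $\tfrac{1}{2\pi i}\int_C \tfrac{dz}{f(z)-z}$ (which is $0$ since $f-z$ has degree $\ge 2$), arrives at $\sum_j \tfrac{1}{f'(r_j)-1}=0$, and then concludes by noting that $\mathrm{Re}(1/w)\ge 0$ iff $\mathrm{Re}(w)\ge 0$ -- a slightly lighter finish than your half-plane computation, though both are correct. One small slip in your index-theorem aside: $\infty$ is a simple fixed point of the extended map with multiplier $0$, so its holomorphic index is $\tfrac{1}{1-0}=1$ rather than $0$, which gives $\sum_j \tfrac{1}{1-f'(w_j)}=0$ (not $1$); this does not affect the residue version you actually use.
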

\begin{proof}
The fixed points of $f$ are the solutions of the equation $f(z) - z = 0$, so there are $d$ fixed points $r_1, \ldots , r_d$ counting multiplicity. If a fixed point has multiplicity $2$ or higher then the derivative at the fixed point must be equal to $1$, so we may assume that $r_1, \ldots , r_d$ are distinct.

Now consider the contour integral

\[ \frac{1}{2\pi i}\int_{C} \frac{1}{f(z) - z} dz,\]

where $C$ is a large circle oriented counterclockwise such that $r_1, \ldots , r_d$ all lie inside the circle. Since $f$ is of degree $d \ge 2$ we have that this integral is equal to $0$, and it follows from the residue theorem that

\[ \sum_{j=1}^{d} \frac{1}{f^\prime(r_j) - 1} = 0.\]

Hence it follows that for some $1 \le j \le d$ we have $\mathrm{Re}(f^\prime(r_j) - 1) \ge 0$. But then $|f^\prime(r_j)| \ge 1$.
\end{proof}

\begin{corollary}\label{translation}
If $f$ is a polynomial then there exist a translation $\tau(z) = z+c$ such that $\tau \circ f \circ \tau^{-1}$ is a root of a powerseries.
\end{corollary}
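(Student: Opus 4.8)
The plan is to combine Lemma~\ref{repelling} with the observation, already noted just above the corollary, that being a root of a powerseries is invariant under conjugation by linear maps $z\mapsto az$ with $a\neq 0$, and with Theorem~\ref{main} applied in dimension $n=1$. First I would use Lemma~\ref{repelling} to produce a fixed point $w$ of $f$ with $|f'(w)|\geq 1$, i.e. either neutral or repelling. Then I would conjugate by the translation $\tau(z)=z-w$ (so $c=-w$), obtaining $g:=\tau\circ f\circ\tau^{-1}$; a direct check shows $g(0)=\tau(f(w))=f(w)-w=0$, so $g$ fixes the origin, and by the chain rule the linear part of $g$ is $g'(0)=f'(w)$, hence has absolute value $\geq 1$.

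Next I would split into the two cases permitted by Theorem~\ref{main}. If $|f'(w)|>1$, then $g$ has linear part with eigenvalue of modulus strictly greater than $1$, and Theorem~\ref{main} immediately gives that $g=\tau\circ f\circ\tau^{-1}$ is a root of a powerseries. If $|f'(w)|=1$, then the eigenvalue of the linear part of $g$ has modulus exactly $1$, and again Theorem~\ref{main} (the ``$|\lambda_i|=1$ for all $i$'' alternative, here with the single eigenvalue $\lambda_1=f'(w)$) applies and gives the same conclusion. Either way the desired translation is $\tau(z)=z-w$.

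The steps are: (i) invoke Lemma~\ref{repelling} to get a non-attracting fixed point $w$; (ii) conjugate by $\tau(z)=z-w$ and verify $g(0)=0$ and $g'(0)=f'(w)$; (iii) apply Theorem~\ref{main} in the case $n=1$ to conclude. I do not expect any genuine obstacle here: the corollary is essentially a packaging of the previous lemma and the main theorem, and the only thing to be careful about is the bookkeeping in step (ii) — that conjugation by a translation really does move the chosen fixed point to the origin without altering the multiplier, which is the standard fact that the derivative at a fixed point is a conjugacy invariant. (One may also remark that if $f$ has degree $\leq 1$ the statement is trivial, since affine maps are locally finite and hence roots of powerseries directly, so the content is in degree $d\geq 2$ where Lemma~\ref{repelling} applies.)
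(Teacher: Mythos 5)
Your proposal is correct and follows exactly the paper's argument: extract a non-attracting fixed point via Lemma~\ref{repelling}, conjugate by the translation moving that fixed point to the origin, and invoke Theorem~\ref{main} (noting, as you do, that degree $\leq 1$ is trivial since affine maps are locally finite). The only minor deviation is your explicit case split between $|f'(w)|>1$ and $|f'(w)|=1$, which the paper subsumes in a single appeal to Theorem~\ref{main}.
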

\begin{proof}
Any polynomial of degree $1$ is already a root of a powerseries, so we may assume that the degree is greater or equal to $2$. But then it follows from Lemma \ref{repelling} that there exist a fixed point $c$ with $|f^\prime(c)|\ge 1$. After conjugation by $\tau$ we have that the polynomial is of the form $p(z) =  \lambda z + h.o.t.$ with $|\lambda| \ge 1$, and it follows from Theorem \ref{main} that $p$ is a root of a powerseries.
\end{proof}

We saw in the previous section that if $f(z)$ is a polynomial with $f(0) = 0$, then we have a very good description of all the coefficients of the polynomials $f^i(z)$. However, it is much harder to understand what the coefficients of the functions $f^i$ are when $f(0) \neq 0$. To prove Theorem \ref{allroots} below we need estimates on the size of all these coefficients. Fortunately we will see that if we are careful about picking the constant term then we do get good enough estimates that allow us to prove that $f$ is a root of a powerseries.

In the proof of Theorem \ref{allroots} we will use the following technical lemma.

\begin{lemma}\label{orthogonal}
Let $\{b_j^i\}_{j, i \in \N}$ and suppose that for every $j \in \N$ and $c > 0$ we have that for all sufficiently large $i \in \N$ the inequality $|b_j^i| \ge c |b_k^i|$ holds for $1 \le k \le j-1$. Then there exists a sequence $a_1, a_2, \ldots$ with
$$
\sum_{i=1}^{+\infty} a_i b_j^i = 0,
$$
for every $j \in \N$.
\end{lemma}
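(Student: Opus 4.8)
The plan is to construct the coefficients $a_i$ greedily, one at a time, so that successively more of the equations $\sum_i a_i b_j^i = 0$ are satisfied, while keeping tight control on the tails. Concretely, I will choose an increasing sequence of indices $i_1 < i_2 < \cdots$ and set $a_i = 0$ for every $i$ not among the $i_m$; the value $a_{i_m}$ will be fixed at stage $m$ so that, after stage $m$, the partial sums $\sum_{k \le m} a_{i_k} b_j^{i_k}$ are very small for $j = 1, \ldots, m$ — small enough that the remaining stages cannot spoil them. The hypothesis is exactly what lets this work: for each fixed $j$, once $i$ is large the coefficient $b_j^i$ dominates (up to any prescribed constant) all the earlier coefficients $b_1^i, \ldots, b_{j-1}^i$, so when I use a new index $i_m$ to kill the $j=m$ equation I am simultaneously forced to make only a controlled perturbation of the equations $j < m$ that were already nearly solved.

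First I would set up the bookkeeping. Fix a summable target sequence, say $\varepsilon_m = 2^{-m}$. Inductively, having chosen $i_1 < \cdots < i_{m-1}$ and $a_{i_1}, \ldots, a_{i_{m-1}}$, I want to pick $i_m$ large and $a_{i_m}$ so that for every $j \le m$ the quantity $\left| \sum_{k=1}^{m} a_{i_k} b_j^{i_k} \right|$ is at most $\varepsilon_m$. The key point is to first use the hypothesis with $j = m$ and $c$ chosen huge (depending on the already-fixed numbers $a_{i_1}, \ldots, a_{i_{m-1}}$ and on $\varepsilon_m$): for all sufficiently large $i$ we have $|b_m^i| \ge c\,|b_k^i|$ for $1 \le k \le m-1$. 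Pick $i_m$ to be such an $i$, also larger than $i_{m-1}$, and also large enough that $b_m^{i_m} \neq 0$ (which holds automatically once $c > 0$ and the earlier coefficients are not all zero — and the degenerate all-zero case is trivial). Then I set
$$
a_{i_m} = -\frac{1}{b_m^{i_m}} \sum_{k=1}^{m-1} a_{i_k} b_m^{i_k},
$$
which makes the $j = m$ partial sum exactly $0$. For $j < m$, the new partial sum differs from the old one (which was $\le \varepsilon_{m-1}$ in absolute value by induction) by $|a_{i_m} b_j^{i_m}|$; since $|b_j^{i_m}| \le |b_m^{i_m}|/c$ and $|a_{i_m}| \le \frac{1}{|b_m^{i_m}|}\sum_{k<m}|a_{i_k} b_m^{i_k}|$, this perturbation is at most $\frac{1}{c}\sum_{k<m}|a_{i_k}|\,|b_m^{i_k}|/|b_m^{i_m}|$; enlarging $c$ (and, if needed, $i_m$, so that $|b_m^{i_m}|$ is not too small relative to the earlier $|b_m^{i_k}|$, again via the hypothesis applied with $j=m$) makes this $\le \varepsilon_m - \varepsilon_{m-1}$ in absolute value, so the $j<m$ sums stay within $\varepsilon_m$. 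One must also record an explicit bound on $|a_{i_m}|$ so that it can be referenced at later stages when choosing the next $c$.

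Finally I would check convergence of the resulting series $\sum_i a_i b_j^i$ for each fixed $j$. For $m \ge j$, the partial sum up to index $i_m$ has absolute value $\le \varepsilon_m \to 0$; but I must also control the contributions of indices $i_k$ with $k > m$ that are added after the $j$-equation is first touched, and more importantly show the full series converges rather than just that a subsequence of partial sums tends to $0$. This is arranged by the same mechanism: at stage $k > j$ the increment to the $j$-th sum is $|a_{i_k} b_j^{i_k}| \le \varepsilon_k - \varepsilon_{k-1} \le \varepsilon_k$, so the series is absolutely convergent with sum $0$ (the partial sums form a Cauchy sequence bounded by $\sum_{k \ge m}\varepsilon_k \to 0$). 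The main obstacle, and the only place real care is needed, is the simultaneous bookkeeping in the inductive step: the constant $c$ at stage $m$ must be chosen after, and in terms of, all the data $a_{i_1},\dots,a_{i_{m-1}}$ and the values $b_m^{i_1},\dots,b_m^{i_{m-1}}$, and one needs the hypothesis in the slightly strengthened reading "for all sufficiently large $i$" to pin down $i_m$ satisfying several largeness conditions at once; none of this is deep, but the quantifiers have to be ordered correctly.
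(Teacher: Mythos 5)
The proposal has a fundamental gap. In your scheme, at stage $m$ you introduce exactly one new coefficient $a_{i_m}$, and you spend it to make the single equation $\sum_{k\le m} a_{i_k}b_m^{i_k}=0$ hold exactly. But then stages $m+1, m+2, \ldots$ each perturb that equation by $a_{i_{m+1}}b_m^{i_{m+1}}, a_{i_{m+2}}b_m^{i_{m+2}},\ldots$, and nothing in the construction ever brings it back to zero: the final value of $\sum_i a_i b_m^i$ is $\sum_{k>m} a_{i_k} b_m^{i_k}$, a convergent series of terms you can force to be \emph{small} but not to \emph{cancel}. Your own closing sentence exposes the slip: you conclude ``absolutely convergent with sum $0$'' from the fact that the increments are summable, but absolute convergence controls the \emph{existence} of the limit, not its \emph{value}. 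The same confusion shows up in the inductive bookkeeping: with $\varepsilon_m = 2^{-m}$ decreasing, the step $|S_j^{(m-1)}|\le\varepsilon_{m-1}$ plus a nonnegative perturbation cannot yield $|S_j^{(m)}|\le\varepsilon_m<\varepsilon_{m-1}$, so the induction cannot even close as written.

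The paper's proof avoids this precisely by allocating $k+1$ new nonzero coefficients at stage $k$ (at freshly chosen indices $n_1<\cdots<n_{k+1}$), and then solving the resulting $(k+1)\times(k+1)$ linear system so that \emph{all} the equations for $j\le k+1$ are restored to be exactly zero at the end of the stage --- not just the new one. The growth hypothesis is then used only to show that this linear system can be solved with coefficients $a_{n_l}$ small enough (condition (ii), $|a_ib_j^i|\le 2^{-k}$) that the infinite sum converges. To repair your argument you would need to adopt the same idea: at each stage add as many fresh degrees of freedom as there are equations to keep exactly satisfied, and invoke the dominance hypothesis to make the linear system well-conditioned and the solution small. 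One coefficient per stage cannot do the job.
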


\begin{proof}

If $b_1^i = 0$ for infinitely many $i \in \N$ then we can restrict ourselves to those $i$ and the condition $\sum_{i=1}^{+\infty} a_i b_1^i = 0$ is automatically satisfied. We can then ignore the $b_1^i$ and continue with the $b_2^i$, renaming them $b_1^i$. Hence we may assume that $b_1^i \neq 0$ for all but finitely many $i \in \N$, and by restricting to a subsequence we may assume that $b_1^i \neq 0$ for any $i \in \N$. We will also assume that the sequence $|b_1^i|$ is increasing and converges to infinity, this is just a matter of rescaling the $b_i^j$'s and the $a_i$'s.

We will construct the sequence $a_1, a_2, \ldots$ recursively. First we choose $a_1$ and $a_2$ such that $a_1 b_1^1 + a_2 b_1^2 = 0$.
Now suppose that we have chosen $a_1, a_2, \ldots , a_{N_k}$ such that

$$
\sum_{i=1}^{N_k} a_i b_j^i = 0,
$$

for every $j \le k$. We claim that we can choose $a_{N_k + 1},
a_{N_k+ 2} ,\ldots , a_{N_{k+1}}$ such that the following are
satisfied:

\noindent (i) $a_i = 0$ for all but $k+1$ choices of $i \in [N_k
+ 1, N_{k+1} ]$.

\noindent (ii) $|a_i b_j^i| \le \frac{1}{2^k}$ for $i \in
[N_k + 1, N_{k+1} ]$ and $j = 0, \ldots k$. And,

\noindent (iii) $\sum_{i=1}^{N_{k+1}} a_i b_j^i = 0$ for every $j \le K=1$.

We recursively continue the construction of the sequence $a_1,
a_2, \ldots$. It follows from (i) and (ii) that the sum
$\sum_{i=1}^{+\infty} a_i b_j^i$ converges for every $j$, and
(iii) completes the proof.
To prove the claim, let $C = \sum_{i=1}^{N_k} a_i b_{k+1}^i$. The claim 
follows automatically when $C=0$ so we may assume $C\neq 0$. We pick $N_k < n_1 < n_2 < \cdots < n_k$ such that the $k\times k$
matrix $(b_j^{n_l})$ has full rank (an easy induction argument
shows that this is possible). Let $u=(u_1, \ldots, u_k)$ be the
unique vector with $b_{k+1}^{n_l} = u\cdot(b_1^{n_l}, \ldots ,
b_k^{n_l})$ for every $l \le k$.
Since $|b_{k+1}^i| > c |b_j^i|$ holds for all $1 \le j \le k$, we can make sure that $n_{k+1}$ is large enough such that

\begin{align} \label{estimate1}
|u \cdot (b_1^{n_{k+1}}, \ldots, b_k^{n_{k+1}})| \le \frac{1}{2} |b_{k+1}^{n_{k+1}}|.
\end{align}

Also, since the sequences $|b_j^i|$ are all eventually increasing and unbounded we can pick $n_{k+1}$ large enough such that

\begin{align}\label{estimate2}
|b_j^{n_{k+1}}| \ge k |b_j^{n_l}|.
\end{align}

Let

$$
M = \min_{v} \max_{j} |\sum_{l=1}^{k} b_j^{n_l} v_l|,
$$

where the minimum is taken over all unit vectors in $\C^k$, and
the maximum is taken over $j \le k$. Also define $K = 1 + \max
|b_j^{n_l}|$ where the maximum is taken over $j, l \le k$.
Now pick $n_{k+1}$ such that for $j = 1, \ldots , k$  we have
$b_j^{n_{k+1}} \neq 0$  and

\begin{align}\label{estimate3}
|b_{k+1}^{n_{k+1}}| > \tilde{C} b_j^{n_{k+1}},
\end{align}

where $\tilde{C} = \frac{2^{k+2} |C| K}{M}$. Let $v = (v^\prime , v_{k+1}) \in \C^{k+1}$ be a unit vector with
$\sum_{l=1}^{k+1} v_l b_j^{n_l} = 0$ for every $j = 1, \ldots k$.
By inequality \eqref{estimate2} we have that $v_{k+1} \le \frac{1}{2}$, so we have that for some $j$ the following hold:

\begin{align}\label{estimate4}
|v_{k+1} b_j^{n_{k+1}}| = |\sum_{l=1}^k v_l b_j^{n_l}| \ge \frac{M}{2}.
\end{align}

Therefore we have that

\begin{align*}
|\sum_{l=1}^{k+1} v_{l} b_{k+1}^{n_l}| \ge \frac{1}{2}|v_{k+1} b_{k+1}^{n_{k+1}}|
\ge \frac{1}{2} \max|v_{k+1} b_j^{n_{k+1}}| \tilde{C} \ge
\frac{M}{4}\tilde{C}.
\end{align*}

Here the first inequality uses \eqref{estimate1}, the second inequality uses \eqref{estimate3}, and the last inequality uses \eqref{estimate4}. Therefore
\[ |\sum_{l=1}^{k+1} v_l  b_{k+1}^{n_l}| \ge 2^k |C|K. \]
If we normalize the vector $v$ such that
$\sum v_l b_{k+1}^{n_l} = -C$ then it follows from our definition of $K$ that
$|v_l b_j^{n_l}| < \frac{1}{2^k}$ for every $j \le k$ and $l\le k+1$.
So if we take $N_{k+1} = n_{k+1}$ and choose the $a_{n_l} =
v_l$ and the rest of the $a_i$ equal to $0$ then the conditions of the
claim are all satisfied.

\end{proof}

Using this lemma we can prove the following theorem.

\begin{theorem}\label{allroots}
Let $f(z)$ be a polynomial with $f^\prime(0) \neq 0$. Then we can choose $c \in \C$ and $a_1, a_2, \ldots \in \C$ such that 
\begin{eqnarray*}
\sum a_i (f(z)-c)^i = 0.
\end{eqnarray*}
\end{theorem}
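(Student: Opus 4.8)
The plan is to pick the constant $c$ so that, for $g:=f-c$, the forward orbit of $0$ escapes to infinity as tamely as possible, to convert this into precise estimates for the Taylor coefficients $[z^j]g^i$ at $0$, and then to invoke Lemma~\ref{orthogonal}.

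First I would dispose of the case $\deg f\le 1$: under the hypothesis $f'(0)\neq0$ such an $f$ is affine and invertible, hence locally finite, hence already a root of a power series (take $c=0$). So assume $\deg f=d\ge 2$, and write $g:=f-c$. The critical points of $g$ are the roots of $f'$, independent of $c$, and $0$ is not among them since $f'(0)\neq0$. Because $g^i(0)$ is a nonconstant polynomial in $c$ for each $i\ge1$, for all $c$ outside a countable set the forward orbit $x_i:=g^i(0)$ avoids every critical point of $g$; and if moreover $|c|$ is large, this orbit escapes to $\infty$. Fix such a $c$; then $(g^i)'(0)=\prod_{l<i}g'(x_l)\neq0$ for every $i$.

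Next, use Böttcher's theorem at $\infty$: there is a conformal $\phi$ on a forward-invariant neighbourhood $\Omega=\{|\phi|>R\}$ of $\infty$ with $\phi\circ g=\phi^{d}$ and $\phi(z)\sim b_d^{1/(d-1)}z$, where $b_d$ is the leading coefficient of $f$. Pick $i_0$ with $x_i\in\Omega$ for all $i\ge i_0$, and $\delta>0$ with $g^{i_0}(D(0,\delta))\subset\Omega$; put $\Psi:=\phi\circ g^{i_0}\colon D(0,\delta)\to\Omega$, so $\Psi'(0)=\phi'(x_{i_0})(g^{i_0})'(0)\neq0$. Then for $z\in D(0,\delta)$ and $i\ge i_0$,
\[
 g^i(z)=g^{\,i-i_0}(g^{i_0}(z))=\phi^{-1}\bigl(\Psi(z)^{\,d^{\,i-i_0}}\bigr)=b_d^{-1/(d-1)}\Psi(z)^{\,d^{\,i-i_0}}+E_i(z),
\]
where $E_i$ is bounded on $D(0,\delta/2)$ uniformly in $i$ (since $\phi^{-1}(w)=b_d^{-1/(d-1)}w+O(1)$ near $\infty$ while $|\Psi(z)^{d^{i-i_0}}|\to\infty$ uniformly), so by Cauchy's estimates each $[z^j]E_i$ is bounded in $i$. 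Writing $N=d^{\,i-i_0}$, $\theta_0=\Psi(0)$ and $\Psi=\theta_0 e^{h}$ with $h(0)=0$, $h'(0)\neq0$, we have $[z^j]\Psi^{N}=\theta_0^{N}[z^j]e^{Nh}=\theta_0^{N}\bigl(h'(0)^j N^j/j!+O(N^{j-1})\bigr)$, hence for each fixed $j$,
\[
 b_j^i:=[z^j]g^i=b_d^{-1/(d-1)}\,\frac{h'(0)^j}{j!}\,N^{j}\theta_0^{N}\bigl(1+o(1)\bigr)\qquad(j\ge1),
\]
and $b_0^i=b_d^{-1/(d-1)}\theta_0^{N}+O(1)$. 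As $|\theta_0|>R>1$, the factor $\theta_0^{N}$ swamps the errors and $|b_j^i|/|b_k^i|\to\infty$ as $i\to\infty$ whenever $j>k$.

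Thus the array $\{b_j^i\}$ satisfies the hypothesis of Lemma~\ref{orthogonal}: for every $j$ and every $c'>0$, eventually $|b_j^i|\ge c'|b_k^i|$ for $1\le k\le j-1$. The lemma then produces $a_1,a_2,\dots$ with $\sum_i a_i b_j^i=0$ for all $j$, which is exactly $\sum_i a_i g^i=\sum_i a_i(f-c)^i=0$. (One could instead avoid Böttcher coordinates and derive the same asymptotics by induction on $j$ from the recursion $[z^j]g^i=\sum_{m=1}^d b_m\sum_{j_1+\dots+j_m=j}\prod_l[z^{j_l}]g^{i-1}$, but then accidental cancellations must be ruled out, which the Böttcher picture handles automatically.) I expect the crux to be the third step: obtaining the $i$-uniform error bound for $g^i$ near $0$ and, above all, the fact that $h'(0)\neq0$ — this is precisely where $f'(0)\neq0$ and the careful choice of $c$ are used, and it is what forces $b_j^i$ to gain a full power of $N$ per increment of $j$, hence the triangular domination needed for Lemma~\ref{orthogonal}.
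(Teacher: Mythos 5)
Your proof is correct and reaches the same goal — showing that the Taylor coefficients $b_j^i=[z^j]\,(f-c)^i$ satisfy the triangular domination hypothesis of Lemma~\ref{orthogonal} — but by a genuinely different route than the paper. The paper works directly with the recursion $b_j^{i+1}=d(b_0^i)^{d-1}b_j^i+\binom{d}{2}(b_0^i)^{d-2}(b_1^i)^2+\cdots$, and the delicate part is ruling out cancellation among the dominant terms: to do this the authors tune not just the modulus but the \emph{argument} of $c$ so that $\mathrm{Arg}(b_j^i/b_0^i)\to 0$ for every $j$, and then propagate the growth $|b_j^i|\gg|b_{j-1}^i|$ inductively. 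You instead pick $c$ generically (orbit escapes and misses the critical points of $f$) and pass to the B\"ottcher coordinate $\phi$ at $\infty$, getting the exact identity $g^i=\phi^{-1}\bigl(\Psi^{d^{\,i-i_0}}\bigr)$ with $\Psi=\phi\circ g^{i_0}$; writing $\Psi=\theta_0 e^h$ with $h'(0)\neq 0$ then hands you the clean asymptotic $b_j^i\sim b_d^{-1/(d-1)}\theta_0^{N}N^{j}h'(0)^j/j!$ with no phase bookkeeping at all, because the exact conjugacy prevents cancellations automatically. The trade-off is clear: the paper's proof is self-contained and elementary but leaves some of the error/argument control implicit, while yours imports B\"ottcher's theorem in exchange for transparent asymptotics and a much less delicate choice of $c$. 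Your remark that the crux is $h'(0)\neq 0$ — i.e. that the orbit of $0$ avoids critical points, which in turn hinges on $f'(0)\neq 0$ — identifies precisely the same mechanism that drives the paper's inductive argument.
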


\begin{proof}

The main idea of the proof is to choose the
constant $c$ large such that the constant terms of $(f-c)^i$ grow rapidly. The coefficients of higher degree terms are given by sums of terms that grow even faster. So
if the norm of the higher degree coefficients are at least as large as the norm of the individual terms that are summed then we can use Lemma \ref{orthogonal} to complete the proof. It turns out that by picking
the constant $c$ carefully we can make sure that the differences between the arguments of all the coefficients of $f^i$ become small and the norms of the coefficients will grow sufficiently fast.

A polynomial $f(z)$ is a root of the powerseries
$\sum a_i (T-c)^i$ if and only if $\g f(\g^{-1} z)$ is a root of
the powerseries $\sum a_i (T - \g c)^i$. Therefore we can choose
$\g$ such that the coefficient of the highest degree term of $\g
f(\g^{-1} z)$ is $1$. From here on we assume that $f(z)$ is monic.

Let us introduce some notation. We will write $(f(z)-c)^i =
\sum_{j=1}^{l(i)} b_j^i z^j$. In particularly we have $f(z)-c =
b_0^1 + b_1^1 + \ldots + z^d$. Note that choosing $c$ is equivalent to choosing $b_0^1$ which is what we will refer to from now on.

We first show that we can choose arbitrarily large $b_0^1$  such that
$\lim_{i \rightarrow \infty} \Arg(\frac{b_0^i}{b_1^i}) = 0$.
First of all, let us look at the rate of growth of the constant
coefficients $b_0^i$. When we choose $b$ large enough then the
sequence $b_0^1, b_0^2, b_0^3, \ldots$ escapes to infinity. We
have that $b_0^{i+1} = (b_0^i)^d + b_{d-1}^{1} (b_0^i)^{d-1} +
l.o.t.$. Similarly we have that $b_1^{i+1} = d (b_0^i)^{d-1}b_1^i +
(d-1) (b_0^i)^{d-2}b_1^i +  l.o.t.$.

We see that there are uniformly bounded error terms $E_i$ and $\tilde{E}_i$ such that

\begin{align}\label{first}
b_0^{i+1} = (b_0^i)^{d-2} (b_0^i + E_i) b_0^i,
\end{align}
and

\begin{align}\label{second}
b_1^{i+1} = d (b_0^1)^{d-2} (b_0^i + \tilde{E}_i) b_1^i.
\end{align}

Therefore we get

\[ \Arg(\frac{b_1^{i+1}}{b_0^{i+1}}) = \Arg \left(\frac{b_0^i + \tilde{E}_i}{b_0^i + E_i}\right) + \Arg(\frac{b_1^i}{b_0^i}).\\
= \Arg(1 + \frac{\hat{E}_i}{b_0^i}) + \Arg(\frac{b_1^i}{b_0^i}).    \]

As the terms $b_0^i$ grow exponentially we see that

\[ \lim_{i \rightarrow \infty}\Arg(\frac{b_1^{i+1}}{b_0^{i+1}}) \]

exists and is very close to $\Arg(\frac{b_1^i}{b_0^i})$ when $b_0^1$ is chosen large. Moreover, the limit depends continuously on the choice of $b_0^1$, so by varying the argument of $b_0^1$ we can make sure that $\lim\Arg(\frac{b_1^{i}}{b_0^{i}}) = 0$.

We now fix $b_0^1$ large such that $b_0^i \rightarrow \infty$ and $\lim\Arg(\frac{b_1^{i+1}}{b_0^{i+1}}) = 0$.
It follows from Equations \ref{first} and \ref{second} that for large $i$ we have

\[ \left|\frac{b_1^{i+1}}{b_0^{i+1}}\right| \approx d \left|\frac{b_1^{i}}{b_0^{i}}\right|. \]

Hence we get that for $i$ large $|b_1^i| \gg |b_0^i|$.
Similarly, we have that

\[b_2^{i+1} = d(b_0^1)^{d-1}b_2^i + {d\choose 2}(b_0^i)^{d-2}(b_1^i)^2 + l.o.t. .\]

Here the second term on the right hand side gives a much faster growth than the first term. Comparing it with the rate of growth of $b_1^{i}$, we see that $|b_2^i| \gg |b_1^i|$. Moreover, as the term ${d\choose 2}(b_0^i)^{d-2}(b_1^i)^2$ determines the growth of the terms $b_2^{i+1}$ and the arguments of $b_0^i$ and $b_1^i$ are very close for large $i$, we see that

\[ \lim_{i \rightarrow \infty} \Arg (\frac{b_2^{i}}{b_0^{i}}) = 0. \]

The argument is identical for $b_j^i$ and $j$ larger than $2$. The coefficient $b_j^{i+1}$ is given by a very large sum. However, all the relevant terms have arguments that are very close (by induction), and it follows that

\[ \lim_{i \rightarrow \infty} \Arg (\frac{b_j^{i}}{b_0^{i}}) = 0. \]

Moreover we see that $|b_j^i| \gg |b_{j-1}^i|$. By Lemma \ref{orthogonal} it follows from that there is a sequence $a_1, a_2, \ldots$
such that $\sum a_i b_j^i = 0$ for every $j$. This means exactly that the
polynomial $f$ is a root of the powerseries $\sum a_i (T-c)^i$.

\end{proof}

\end{document}